\documentclass[10pt]{article}
\setlength{\textwidth}{125mm}
\setlength{\textheight}{185mm}
\setlength{\parindent}{8mm}
\frenchspacing




\usepackage{latexsym, amssymb, bbm}
\usepackage{amsthm}
\usepackage{xcolor}
\usepackage{graphicx}
\usepackage{amsmath}
\usepackage{enumerate}



\newtheorem{lem}{Lemma}[section]
\newtheorem{thm}[lem]{Theorem}
\newtheorem{prop}[lem]{Proposition}
\newtheorem{cor}[lem]{Corollary}

\newtheorem{remark}[lem]{Remark}

\theoremstyle{definition}
\newtheorem{defn}[lem]{Definition}


\DeclareMathOperator{\Diff}{Diff}
\DeclareMathOperator{\id}{id}
\DeclareMathOperator{\Int}{Int}



\newcommand{\abs}[1]{\left| #1 \right|}

\newcommand{\ide}{\boldsymbol{\mathbbm{1}}}

\newcommand{\bthm}{\begin{thm}}
\newcommand{\ethm}{\end{thm}}

\newcommand{\blem}{\begin{lem}}
\newcommand{\elem}{\end{lem}}

\newcommand{\bcor}{\begin{cor}}
\newcommand{\ecor}{\end{cor}}

\newcommand{\bprop}{\begin{prop}}
\newcommand{\eprop}{\end{prop}}

\newcommand{\brmk}{\begin{remark}}
\newcommand{\ermk}{\end{remark}}

\newcommand{\bpf}{\begin{proof}}
\newcommand{\epf}{\end{proof}}

\newcommand{\beq}{\begin{equation}}
\newcommand{\eeq}{\end{equation}}


\numberwithin{equation}{section}

\def\C{\mathbb{C}}

\def\Q{\mathbb{Q}}
\def\R{\mathbb{R}}
\def\Z{\mathbb{Z}}

\def\CP{\mathbb{CP}}
\def\RP{\mathbb{RP}}
\def\bP{\mathbb{P}}

\def\cL{\mathcal{L}}

\def\cN{\mathcal{N}}

\def\d{\partial}

\def\eps{\epsilon}
\def\i{\iota}

\def\l{\lambda}

\def\w{\omega}

\def\Symp{\mbox{Symp}}

\def\xkm2{\overline{X}_{k-2}}


\begin{document}

\title{Spherical Lagrangians via ball packings and symplectic cutting}

\author{\textsc Matthew Strom Borman\thanks{Partially supported by NSF-grant DMS 1006610.},
Tian-Jun Li\thanks{Supported by NSF-grant DMS 0244663.},
and Weiwei Wu\footnotemark[2]}

\maketitle

\begin{abstract}
In this paper we prove the connectedness of symplectic ball packings
in the complement of a spherical Lagrangian, $S^{2}$ or $\RP^{2}$,
in symplectic manifolds that are rational or ruled.
Via a symplectic cutting construction this is a natural extension of
McDuff's connectedness of ball packings in other settings
and this result has applications to several different
questions: smooth knotting and unknottedness results for
spherical Lagrangians, the transitivity of the action of the
symplectic Torelli group, classifying Lagrangian isotopy classes in the presence of knotting,
and detecting Floer-theoretically
essential Lagrangian tori in the del Pezzo surfaces.
\end{abstract}

%


\section{Introduction}

In \cite{LW12} the second and third named authors investigated the existence and uniqueness
 (unknottedness) problems of
Lagrangian $S^2$ in rational manifolds via tools from symplectic field theory and
the study of symplectic ball-packings \cite{MP94,Mc98}.
In this paper we continue to explore the connections between symplectic ball
packing, symplectic cutting, and Lagrangian unknottedness, while
answering several questions from \cite{LW12} and extending the results to Lagrangian $\RP^2$s.
See \cite{EP97} for an early survey of the problem of Lagrangian knots and more recent results in
\cite{DRE12, FS04, Ev10, Ev11, Hi04, Se08, Vi06}.

Our first result,
conjectured in \cite[Remark 5.2]{LW12},
is on the connectedness of symplectic ball packings in the complement of a
Lagrangian $S^{2}$ or $\RP^{2}$:

\begin{thm}\label{t:BPRL-sphere}
    Let $(M^{4}, \w)$ be a closed $4$-dimensional symplectic manifold
    that is rational or ruled
    and let
    $L \subset M$ be a Lagrangian $S^{2}$
    or $\RP^{2}$,
    then the space of symplectic ball packings in $M \backslash L$ is connected.
\end{thm}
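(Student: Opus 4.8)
The plan is to use symplectic cutting to trade the Lagrangian $L$ for an embedded symplectic sphere in a new rational or ruled manifold, and then to reduce to a relative version of McDuff's connectedness theorem for ball packings. By Weinstein's theorem, fix an identification of a neighborhood $\cN$ of $L$ with a disk bundle neighborhood of the zero section in $T^*L$. Since the round metric on $L$ (whether $S^2$ or $\RP^2$) has periodic cogeodesic flow, $\cN\setminus L$ carries a free Hamiltonian circle action whose moment map is a positive multiple of $|p|$, and for small $\epsilon>0$ this permits a symplectic cut of $M$ along $\{|p|=\epsilon\}$, producing a closed symplectic manifold $\overline M$ together with its cut locus $Z$, an embedded symplectic sphere of area proportional to $\epsilon$. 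Working in the local model one checks that $Z\cdot Z=+2$ when $L=S^2$ (the cut of $T^*S^2$ being a small $S^2\times S^2$ with $Z$ the diagonal) and $Z\cdot Z=+4$ when $L=\RP^2$ (the cut of $T^*\RP^2$ being a small $\CP^2$ with $Z$ a conic); in either case $Z\cdot Z\ge 0$, so, since a closed symplectic $4$-manifold containing an embedded symplectic sphere of non-negative self-intersection is rational or ruled, $\overline M$ is again rational or ruled. Moreover $\overline M\setminus Z$ is symplectomorphic to $M$ with a small closed neighborhood of $L$ deleted; consequently every symplectic ball packing of $M\setminus L$ — having compact image — lies inside $\overline M\setminus Z$ once $\epsilon$ is small, while a packing of $\overline M$ disjoint from $Z$ can be pushed, by an arbitrarily small shrinking, into $\overline M\setminus Z\hookrightarrow M\setminus L$. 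Thus it suffices to show that, for fixed small $\epsilon$ and fixed capacities, the space of symplectic ball packings of $\overline M$ disjoint from $Z$ is connected (the case of varying capacities follows since the admissible parameter region is connected).

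To analyze that space I invoke McDuff's correspondence between ball packings and symplectic blow-ups. Blowing up the $k$ balls of a packing disjoint from $Z$ produces a symplectic form on $X:=\overline M\#k\,\overline{\CP^2}$ in the fixed class $c=\pi^*[\overline\omega]-\sum_i a_i\,\mathrm{PD}(E_i)$ for which the exceptional classes $E_1,\dots,E_k$ \emph{and} the class $[Z]$ are each represented by disjoint embedded symplectic spheres, and conversely; moreover two packings lie in the same component iff the corresponding forms can be joined through such forms. Since $\overline M$, hence $X$, is rational or ruled, any two cohomologous symplectic forms on $X$ are isotopic through symplectic forms in that class (uniqueness of symplectic forms on rational and ruled $4$-manifolds, after Lalonde--McDuff and Li--Liu), so the space of forms in class $c$ is connected. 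To promote this to the subspace where $E_1,\dots,E_k,[Z]$ remain symplectically embedded, consider the space of tuples $(\tau,C_1,\dots,C_k,C_Z)$ of such a form together with disjoint embedded $\tau$-symplectic spheres in the classes $E_i$ and $[Z]$: over a fixed $\tau$ this space is connected, because for generic $\tau$-tame $J$ each of these classes — the $E_i$ being exceptional, $[Z]$ a genus-zero class of non-negative square disjoint from the $E_i$ — has a unique, automatically embedded, $J$-holomorphic representative, the space of admissible $J$ is connected, and any symplectic sphere in one of these classes is isotopic to a $J$-holomorphic one; combined with connectedness of the space of forms, the tuple space is connected. Finally, blowing the $C_i$ back down and then absorbing the resulting form and the image of $C_Z$ into the standard triple $(\overline M,\overline\omega,Z)$ — using uniqueness of symplectic forms, McDuff's isotopy of symplectic spheres, and the isotopy extension theorem to make the latter ambient — exhibits every tuple as arising from an honest ball packing of $\overline M$ disjoint from $Z$ through a connected family of auxiliary choices; hence that packing space is connected, completing the proof.

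The main obstacle is the second paragraph. The uniqueness of symplectic forms and the isotopy/uniqueness of exceptional and of genus-zero symplectic spheres in rational and ruled $4$-manifolds are available in the literature, but one must run them \emph{relative to the entire configuration} $E_1\cup\dots\cup E_k\cup Z$ at once and track it faithfully through each blow-up, blow-down and ambient isotopy; keeping the balls disjoint from $Z$ while straightening everything is the delicate book-keeping. The $\RP^2$ case is the harder one: there $[Z]$ has square $4$ and is locally twice a line, so there is no reduction to a branched double cover and one must control moduli of symplectic conics directly; verifying that the $\RP^2$-cut is genuinely rational or ruled with $Z$ of the asserted type, and that the reformulation of packings survives it, is where the remaining work concentrates.
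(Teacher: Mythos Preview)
Your symplectic-cut reduction is exactly the right idea and matches the paper's strategy, but there is a sign error that undermines both of the key steps in your second paragraph. The cut along $\{|p|=\epsilon\}$ produces \emph{two} closed pieces, $W^{-}$ (the cut of the disk bundle $\cN$, containing $L$) and $W^{+}=\overline M$ (containing $M\setminus\cN$), and the normal bundles of the common divisor $Z$ in these two pieces have \emph{opposite} Euler classes. Your computation ``the cut of $T^{*}S^{2}$ is a small $S^{2}\times S^{2}$ with $Z$ the diagonal'' and ``the cut of $T^{*}\RP^{2}$ is a small $\CP^{2}$ with $Z$ a conic'' is correct, but it describes $W^{-}$, not $\overline M$. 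In the piece you actually care about, $Z\cdot Z=-2$ when $L=S^{2}$ and $Z\cdot Z=-4$ when $L=\RP^{2}$.

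This breaks two things. First, McDuff's criterion (a symplectic sphere of non-negative self-intersection forces rational or ruled) no longer applies, so you need a different argument that $\overline M$ is rational or ruled; the paper handles this via Kodaira dimension of symplectic sums and a cohomology-ring computation. Second, and more seriously, your relative connectedness argument relies on $[Z]$ having a $J$-holomorphic representative for generic tame $J$, but a sphere class with $Z^{2}=-2$ or $-4$ has Seiberg--Witten degree $d(Z)=2Z^{2}+2<0$, so $Z$ is \emph{not} holomorphic for generic $J$ and the ``unique embedded representative'' step fails outright. The whole difficulty of the relative problem is precisely that one must inflate and find curves while keeping a \emph{negative} sphere $Z$ holomorphic; this is the content of McDuff's nongeneric-curves paper, which the authors invoke to establish the relative version of the deformation-implies-isotopy and packing-connectedness statements. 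Your outline would work essentially as written if $Z^{2}\geq -1$, but for $-2$ and $-4$ spheres the missing ingredient is exactly this relative inflation machinery.
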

\noindent
Via symplectic
cutting, Theorem~\ref{t:BPRL-sphere}
follows from the connectedness of
the space of ball packings in the complement of a symplectic sphere.
This is established for certain symplectic surfaces in
Proposition \ref{p:BPRS} by work of McDuff on relative inflation \cite{Mc98, Mc12}.

Recall that a symplectic rational manifold $(M^{4}, \w)$ is where $M$ is either $\CP^{2}$, a symplectic blow-up of $\CP^2$, or $S^2\times S^2$.  In \cite{LW12}, building on work of Evans \cite{Ev10}, Hamiltonian unknottedness for Lagrangian $S^2$'s in symplectic rational manifolds was established
when the Euler characteristic $\chi(M) \leq 7$,
except for the case of a characteristic homology class.
We complete the picture here in Theorem~\ref{t:char} in the appendix.
This result is sharp due to Seidel's \cite{Se08} construction of Hamiltonian knotted Lagrangian spheres in 
$\CP^{2}\#5\overline{\CP}^{2}$.

As noted in \cite[Section 6.4.2]{LW12},
Theorems~\ref{t:BPRL-sphere} and \ref{t:char} have the following consequences,
which we also prove in the appendix.
Recall that the symplectic Torelli group
$\Symp_{h}(M, \w)$ is the subgroup of $\Symp(M, \w)$ that acts trivially on homology $H_{*}(M;\Z)$.

\begin{cor}\label{c:TTS2} Suppose  $(M^{4}, \w)$ is a symplectic rational manifold.

 \begin{enumerate}[(1)]
  \item (Symplectic unknottedness of Lagrangian $S^2$)
 The symplectic Torelli group
     $\Symp_{h}(M, \w)$ acts transitively on homologous Lagrangian spheres.

  \item (Smooth unknottedness of Lagrangian $S^2$)
Homologous
    Lagrangian spheres are smoothly isotopic to each other.
  \end{enumerate}
\end{cor}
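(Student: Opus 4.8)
The plan is to deduce both statements from Theorem~\ref{t:BPRL-sphere} together with Theorem~\ref{t:char}, by reducing an arbitrary symplectic rational manifold to the low-complexity models in which Hamiltonian unknottedness of Lagrangian spheres is already available. Concretely, I would argue by induction on $\chi(M)$. In the base case $\chi(M)\le 7$ --- that is, $M=\CP^2\#k\overline{\CP}^2$ with $k\le 4$, or a blow-up of $S^2\times S^2$ at at most three points --- two homologous Lagrangian spheres are related by a Hamiltonian diffeomorphism: for non-characteristic classes this is the result of \cite{Ev10, LW12}, and for the characteristic class it is Theorem~\ref{t:char}. A Hamiltonian diffeomorphism lies in $\Symp_h(M,\w)$ and is smoothly (indeed symplectically) isotopic to the identity, so both (1) and (2) hold in the base case simultaneously.

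For the inductive step, let $\chi(M)\ge 8$ and let $L_0,L_1\subset M$ be Lagrangian spheres in a common class $A$, so $A^2=-2$ and $A\cdot c_1=0$. When $A$ is not characteristic one can find an exceptional class $E$ with $E\cdot A=0$ (more generally a maximal collection of such classes realizing $M$ as a blow-up of some $M'$ with $\chi(M')<\chi(M)$); McDuff's relative inflation, in the form used for Proposition~\ref{p:BPRS}, shows that $E$ is represented by an embedded symplectic ball disjoint from $L_i$, i.e.\ that relative ball packings of the relevant type in $M\setminus L_i$ exist, and Theorem~\ref{t:BPRL-sphere} says the space of such packings is connected. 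The remaining possibility, that $A$ is characteristic, is constrained by the congruence $A^2\equiv\sign(M)\pmod 8$ to occur only for $\CP^2\#k\overline{\CP}^2$ with $k\equiv 3 \pmod 8$, and the first such case $k=3$ falls under the base case via Theorem~\ref{t:char}; the higher ones are handled by the same reduction once an orthogonal exceptional class is produced.

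Now choose relative packings $P_0\subset M\setminus L_0$ and $P_1\subset M\setminus L_1$ of the same combinatorial type and blow down along them. Since the blown-down classes are orthogonal to $A$, the spheres $L_0,L_1$ descend to homologous Lagrangian spheres in $M'$; the two resulting symplectic forms lie in the same cohomology class and are therefore symplectomorphic by McDuff's uniqueness of symplectic forms on rational manifolds, so we may assume they coincide. The inductive hypothesis gives $\varphi'\in\Symp_h(M')$ with $\varphi'(L_0)=L_1$. Using Theorem~\ref{t:BPRL-sphere} once more --- this time that relative packings in $M'\setminus L_1$ are connected, together with an isotopy-extension argument supported away from $L_1$ --- we may modify $\varphi'$ so that it additionally carries the descended packing of $P_0$ to that of $P_1$; the modified map then lifts to a symplectomorphism $\varphi\colon M\to M$ with $\varphi(L_0)=L_1$. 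A bookkeeping check on homology classes under blow-up shows $\varphi\in\Symp_h(M)$, which proves (1). For (2), one observes that $\varphi$, acting trivially on $H_*(M)$, lies in the identity component of $\Diff(M)$ for a rational $4$-manifold --- the same input that renders Seidel's Hamiltonian-knotted spheres smoothly standard --- so $L_0=\varphi^{-1}(L_1)$ is smoothly isotopic to $L_1$.

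The step I expect to be the crux is the simultaneous normalization of $P_0$ and $P_1$: one must genuinely use the \emph{connectedness} in Theorem~\ref{t:BPRL-sphere}, not merely nonemptiness, to arrange that the two blow-downs and the two descended Lagrangians present $(M',L_i)$ as the \emph{same} pair up to a symplectomorphism to which the inductive hypothesis applies, and then to propagate the resulting map back up through the blow-up while keeping track of its homological action. The production of an exceptional class orthogonal to $A$ --- which is what makes the generic reduction possible --- is precisely what isolates the characteristic-class case and forces the separate input of Theorem~\ref{t:char}.
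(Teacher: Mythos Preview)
Your argument for part~(1) is correct and close in spirit to the paper's, but organized differently. You induct on $\chi(M)$ with base case $\chi\le 7$, which already consumes the full strength of \cite{Ev10,LW12} together with Theorem~\ref{t:char}. The paper instead reduces in one shot: after normalizing $[L_i]=E_1-E_2$ via \cite[Proposition~4.10]{LW12}, it blows down the entire push-off set $\{E_3,\dots,E_k,\,H-E_1-E_2\}$ to land in a monotone $S^2\times S^2$, invokes only Hind's uniqueness \cite{Hi04} there, and then applies Theorem~\ref{t:BPRL-sphere} once to match the two packings before blowing back up. This is more economical in its external input. One point in your write-up needs repair: you say the characteristic cases with $k\equiv 3\pmod 8$, $k>3$, are ``handled by the same reduction once an orthogonal exceptional class is produced,'' but a characteristic class pairs \emph{oddly} with every exceptional class, so no such $E$ exists. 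The correct statement, implicit in \cite[Proposition~4.10]{LW12}, is that for $k\ge 4$ every Lagrangian-sphere class is Cremona equivalent to $E_1-E_2$ and hence never characteristic, so these cases are vacuous.

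Your deduction of part~(2) from part~(1), however, has a genuine gap. You assert that a Torelli symplectomorphism of a rational $4$-manifold lies in $\Diff_0(M)$, calling it ``the same input that renders Seidel's Hamiltonian-knotted spheres smoothly standard.'' That injectivity of $\pi_0(\Diff(M))\to\aut(H_2(M))$ is not a standard fact for $\CP^2\#k\overline{\CP}^2$ with arbitrary $k$, and it is not how Evans or \cite{LW12} obtain smooth unknottedness. The paper proves~(2) by citing \cite[Theorem~1.6]{LW12}, whose mechanism (visible also in the proof of Proposition~\ref{p:isotopy criterion}) is direct: blow down the push-off set, use the Hamiltonian isotopy in the minimal model, and lift it to a \emph{smooth} isotopy in $M$ by noting that in the smooth category avoiding a blow-up ball is the same as avoiding a point. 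You should replace your appeal to $\Symp_h\subset\Diff_0$ with this construction.
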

\noindent
The smooth unknottedness for Lagrangian spheres was first noticed by Evans \cite{Ev10} in the case
of del Pezzo surfaces, i.e.\ monotone rational symplectic manifolds.

Let $(X_{5}, \w_{0})$ be a monotone $\CP^{2}\#5\overline{\CP}^{2}$.   
By \cite[Theorem 1.4]{LW12} one can explicitly classify the homology classes $\xi \in H_{2}(X_{5}; \Z)$
that can be represented by a Lagrangian sphere in $(X_{5}, \w_{0})$.  Furthermore
Evans in \cite[Theorem 1.3]{Ev11} computes the weak homotopy type of
$\Symp_{h}(X_{5}, \w_{0})$ and in \cite[Section 6.1]{Ev11} shows $\pi_{0}(\Symp_{h}(X_{5}, \w_{0}))$
is a $\Z_{2}$-quotient of the pure braid group $PBr(S^{2}, 5)$ on $S^{2}$ with $5$ strands.
These two results above together with Corollary \ref{c:TTS2} show that Lagrangian spheres in 
$(X_{5}, \w_{0})$ are unique up to Hamiltonian isotopy and a certain (explicit) braid group action.  

Corollary \ref{c:TTS2} therefore allows for the first explicit description of Hamiltonian
isotopy classes of Lagrangian spheres in a closed symplectic manifold where there is
Lagrangian knotting.  Hind \cite{Hi12} has done this in the non-compact setting with
the plumbing of two $T^{*}S^{2}$.  In forthcoming work of the third author \cite{Wu13} this braid group
action will be connected with
Lagrangian Dehn twists along a finite set of Lagrangian spheres and the
Hamiltonian isotopy classes of Lagrangian spheres in an $A_{n}$-singularity will be studied.

Such an explicit description brings up a more intriguing question.
For a symplectic rational manifold Corollary \ref{c:TTS2}(1) tells us we have a transitive
group action of $\pi_{0}(\Symp_{h}(M)) = \Symp_{h}(M)/\Symp_{0}(M)$
on $\mathcal{L}ag_{\xi}(M, S^{2})$, the Hamiltonian isotopy classes of Lagrangian spheres in the class
$\xi \in H_{2}(M; \Z)$, and the stabilizer of this action seems hard to understand.
It may be possible to understand the stabilizer of the action of 
$\pi_{0}(\Symp_{h}(M))$ on the Fukaya category in terms of braid group elements \cite{KS02}.

The unknotting picture for Lagrangian $\RP^2$ is more intriguing.
We first have the following parallel results of Corollary \ref{c:TTS2} for small Betti numbers:

\bthm\label{t:TTRP2} Let $(M^4,\w)$ be a symplectic rational manifold and $b^{-}_{2}(M) \leq 8$.
   \begin{enumerate}[(1)]
   \item  $\Symp_h(M, \w)$ acts transitively on $\Z_2$-homologous Lagrangian $\RP^2$'s.
   \item  $\Z_2$-homologous Lagrangian $\RP^2$'s are smoothly isotopic.
   \end{enumerate}
\ethm
\noindent
Note in Lemma~\ref{l:RP2 classes} we prove there is an unique
$\Z_{2}$-homology class containing a Lagrangian $\RP^{2}$ when $b^{-}_{2}(M) \leq 2$.

While it is still possible that
the uniqueness of $\Z_2$-homologous $\RP^2$'s up to smooth isotopy
 is valid for an arbitrary symplectic rational manifold,
 the following
 result  gives some hints about the complication of
the problem:

\begin{prop} \label{p:KRP2}
 Let  $M=\CP^2\#k \overline{\CP}^{2}$.

 \begin{enumerate}[(1)]

 \item  For $k \geq 9$ there exists a symplectic form $\w$ on $M$
 with $L$ a Lagrangian $\RP^2$ and $S$ a symplectic $(-1)$-sphere, such that
  $L$ and $S$ have trivial $\Z_2$-intersection, but
 $L$ and $S$ cannot be made disjoint with a smooth isotopy.

\item  For $k \geq 10$ there exists $L_{0}$ and $L_{1}$ that are $\Z_2$-homologous
smoothly embedded $\RP^2$'s, which are not smoothly isotopic.
Furthermore there are deformation equivalent symplectic forms $\w_{0}$ and $\w_{1}$
           on $M$ so that $L_i \subset (M, \w_{i})$ are Lagrangians.
\end{enumerate}
\end{prop}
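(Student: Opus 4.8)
All of the examples are built by symplectically blowing up the standard Lagrangian $\RP^{2}\subset\CP^{2}$, the one fixed by complex conjugation. Its complement $\CP^{2}\setminus\RP^{2}$ is a standard neighbourhood of the conic $Q=\{z_{0}^{2}+z_{1}^{2}+z_{2}^{2}=0\}$, a symplectic sphere of self-intersection $+4$ disjoint from $\RP^{2}$; in particular it admits symplectic packings by arbitrarily many balls of small capacity, and the space of such packings is controlled by Theorem~\ref{t:BPRL-sphere} (together with Proposition~\ref{p:BPRS}). Blowing up at such a packing yields, for every $k$, a symplectic form $\omega$ on $M=\CP^{2}\#k\overline{\CP}^{2}$ for which the $\RP^{2}$, still written $L$, is Lagrangian and disjoint from all exceptional spheres $E_{1},\dots,E_{k}$, with $\Z_{2}$-homology class $[L]=[H]\bmod 2$, where $H$ is the line class. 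Since by the structure theory of rational symplectic $4$-manifolds every $(-1)$-class pairing positively with $\omega$ is represented by an embedded symplectic $(-1)$-sphere, this fixes the ambient picture for both parts; what remains is to choose the class of an auxiliary sphere and to establish a \emph{smooth} non-disjointness statement.

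For part (1) take $k=9$ and let $S$ be a symplectic $(-1)$-sphere in the class $A=2H-E_{1}-E_{2}-E_{3}-E_{4}-E_{5}$; then $A^{2}=-1$, $c_{1}\cdot A=1$, and $A\cdot[L]=2H^{2}\equiv 0\pmod 2$, so the $\Z_{2}$-intersection of $S$ and $L$ vanishes. The claim to be proved, and the heart of the proposition, is that no smooth isotopy makes $S$ disjoint from $L$. The plan is a blow-down argument by contradiction: from a smoothly embedded $(-1)$-sphere $S'$ in class $A$ disjoint from $L$ one blows $S'$ down to obtain $L$ as a smoothly embedded $\RP^{2}$ in $\CP^{2}\#8\overline{\CP}^{2}$, still carrying the normal data forced by the Lagrangian condition and sitting — together with the surviving exceptional classes — in a configuration excluded by the smooth uniqueness of Lagrangian $\RP^{2}$ in the range $b^{-}_{2}\le 8$ (Theorem~\ref{t:TTRP2}), the diffeomorphism-uniqueness of $(-1)$-spheres in rational surfaces, and a non-orientable adjunction estimate. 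This is precisely why the threshold is $k\ge 9$: when $b^{-}_{2}\le 8$ Theorem~\ref{t:TTRP2} makes $L$ smoothly standard, after which $S$ can be pushed off $L$. I expect this step to be the main obstacle of the whole proposition; the ball-packing construction, the homological bookkeeping, and the existence of $S$ as a symplectic sphere are all soft.

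Part (2) then follows by bootstrapping from part (1). Take $k=10$; let $L_{1}\subset(M,\omega_{1})$ be the Lagrangian $\RP^{2}$ of part (1) (with $k=9$) together with one further symplectic blow-up performed disjointly from everything, and let $L_{0}\subset(M,\omega_{0})$ be the standard Lagrangian $\RP^{2}$ arranged so that a symplectic $(-1)$-sphere in the class $2H-E_{1}-\dots-E_{5}$ — hence also some smooth $(-1)$-sphere in that class — misses it. Both $L_{i}$ represent $[H]\bmod 2$, so they are $\Z_{2}$-homologous; both $\omega_{i}$ are obtained by blowing up the Fubini–Study form at ten balls of small weight, hence are deformation equivalent by the description of the symplectic cone of a rational surface. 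Finally $L_{0}$ and $L_{1}$ are not smoothly isotopic: a smooth ambient isotopy carrying $L_{1}$ to $L_{0}$ acts trivially on homology and would therefore carry a smooth $(-1)$-sphere disjoint from $L_{0}$ to one disjoint from $L_{1}$ in the class $2H-E_{1}-\dots-E_{5}$, contradicting part (1) applied with $k=10$. Thus the only non-formal ingredient is the smooth non-disjointness of part (1), and it is there that the $b^{-}_{2}\le 8$ hypothesis of Theorem~\ref{t:TTRP2} enters to pin down the thresholds $9$ and $10$.
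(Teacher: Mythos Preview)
Your chosen class $A=2H-E_1-\cdots-E_5$ does \emph{not} give an obstruction: for your $L$ (the standard $\RP^2$ with nine small blow-ups in its complement) this class is represented by a symplectic sphere disjoint from $L$. Concretely, choose the first five blow-up points on the conic $Q=\{z_0^2+z_1^2+z_2^2=0\}\subset\CP^2\setminus\RP^2$; the proper transform of $Q$ is then a symplectic $(-1)$-sphere in class $A$ missing $L$. By Theorem~\ref{t:BPRL-sphere} any other packing gives a symplectomorphic pair $(M,L)$, so the choice of packing cannot rescue the example. In the paper's language, $\iota(A)$ lands in the class $2H'-F_0-E_2-\cdots-E_5$ of $W=\CP^2\#10\overline{\CP}^2$, which is an honest exceptional class, so Lemma~\ref{l:HERB}(3) confirms there is no obstruction. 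Your blow-down/Theorem~\ref{t:TTRP2}/``non-orientable adjunction'' sketch would have to rule this out, and it cannot: after a smooth blow-down there is no symplectic form in sight, so Theorem~\ref{t:TTRP2} does not apply, and no adjunction-type inequality distinguishes your $L$ from a standard one.

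The paper's mechanism is genuinely different. One passes to the symplectic cut $(W,Z)$ and uses the monomorphism $\iota\colon\mathcal{L}_L\hookrightarrow H_2(W;\Z)$ of Lemma~\ref{l:HERB}: a smooth sphere in class $E$ disjoint from $L$ would give a smooth sphere in $W$ in class $\iota(E)$, so one wins by exhibiting an exceptional $E\in\mathcal{L}_L$ with $\iota(E)$ of positive minimal genus. In your basis (where $[L]=H\bmod 2$) the class that works is $E=4H-3E_1-E_2-\cdots-E_9$; its image $\iota(E)=3H'+E_0'-E_1'-\cdots-E_9'$ in $\CP^2\#10\overline{\CP}^2$ has minimal genus $1$ by the Kervaire--Milnor bound. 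This is where the threshold $k=9$ really enters --- you need all nine $E_i$'s, with one coefficient $\geq 2$, to land on a non-spherical class. For part (2) the paper does not compare two blow-up configurations (your $L_0$ and $L_1$ are in fact the same once part (1) fails); instead it takes $L'=F(L)$ for $F$ a smooth Dehn twist swapping $E$ with the extra class $E_{10}$, so that $L'$ \emph{is} disjoint from a sphere in class $E$ while $L$ is not.
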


Our symplectic packing results also provide ways to construct disjoint Lagrangian $S^{2}$'s and $\RP^{2}$'s,
and this leads to Floer-theoretic statements about properties of Lagrangians,
in particular showing that they are not superheavy with respect to the fundamental class of quantum homology \cite{EP09}.
Let $(X_{k}, \w_{0})$ be a del Pezzo surface, i.e.\ a monotone
$\CP^{2}\#k \overline{\CP}^{2}$ with $0 \leq k \leq 8$. 

\begin{thm}\label{t:RPNS}
    Any Lagrangian $\RP^{2}\subset(X_{k}, \w_{0})$ is not
    superheavy with respect to the fundamental quantum homology class
    $\ide \in QH_{4}(X_{k}, \w_{0})$ when $k \geq 2$.  Likewise for Lagrangian spheres $S^{2} \subset X_{k}$ when
    $k\geq 3$.
\end{thm}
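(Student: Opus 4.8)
The plan is to exploit the dichotomy in the Entov–Polterovich theory between superheavy Lagrangians and the existence of disjoint "displacing" data. Recall that if $L\subset(X_k,\w_0)$ were superheavy with respect to $\ide\in QH_4(X_k,\w_0)$, then $L$ could not be disjoined from any non-$\ide$-heavy set, and in particular two $\ide$-superheavy sets cannot be disjoint (this is the basic incompatibility property from \cite{EP09}). So it suffices to produce, inside $(X_k,\w_0)$ for the stated range of $k$, a \emph{second} Lagrangian $L'$ that is $\ide$-superheavy and can be made disjoint from $L$. The natural candidates are the monotone Lagrangian tori, or more simply another spherical Lagrangian, and the mechanism for producing a disjoint copy is exactly the symplectic ball-packing technology developed in this paper.

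First I would recall that in the del Pezzo surface $(X_k,\w_0)$ there is a well-known $\ide$-superheavy Lagrangian torus (the Clifford-type monotone torus), and, when $k\ge 2$, a Lagrangian $\RP^2$ can be displaced from it — or, symmetrically, one produces two disjoint copies of the relevant spherical Lagrangian. Concretely: Theorem~\ref{t:BPRL-sphere} gives connectedness of the space of symplectic ball packings in the complement of a Lagrangian $\RP^2$ (resp.\ $S^2$), and combined with the symplectic cutting construction this lets us arrange a ball, remove it / cut along it, and on the resulting manifold locate a superheavy Lagrangian that lies in the complement of $L$. The key quantitative input is that for $k\ge 2$ (resp.\ $k\ge 3$) there is enough room: the monotone symplectic area is large enough that the complement of the Lagrangian $\RP^2$ (resp.\ $S^2$) admits a ball packing of the size needed to support a displacing Hamiltonian, or equivalently to embed a toric/Lagrangian model in which the second superheavy object lives disjointly from $L$. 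I would then invoke the Entov–Polterovich incompatibility: an $\ide$-superheavy set intersects every $\ide$-heavy set, and two $\ide$-superheavy sets are never disjoint, so the existence of a disjoint $\ide$-superheavy Lagrangian forces $L$ itself to fail superheaviness.

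The step I expect to be the main obstacle is the quantitative bookkeeping that pins down exactly the thresholds $k\ge 2$ and $k\ge 3$: one must check that after blowing up (or symplectic cutting), with the monotone form, the complement of $L$ genuinely contains a ball of the size required by the displacement / superheaviness argument, and that below these thresholds this room disappears (consistent with, e.g., the fact that in $\CP^2$ itself $\RP^2$ \emph{is} superheavy). This is where the packing numbers of del Pezzo surfaces and the precise areas of exceptional classes enter, and where Proposition~\ref{p:BPRS} and the relative inflation results of McDuff \cite{Mc98,Mc12} do the work. A secondary technical point is to make sure the "second" Lagrangian used is legitimately $\ide$-superheavy in the del Pezzo surface — for this one uses the known computations of Entov–Polterovich and Fukaya–Oh–Ohta–Ono type results identifying the superheavy monotone fibers, which hold precisely because quantum homology of $X_k$ has a field summand whose idempotent is detected by that Lagrangian. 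Putting the packing construction together with the incompatibility of superheavy sets then yields the theorem.
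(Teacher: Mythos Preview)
Your overall framework is correct: by Entov--Polterovich a $\ide$-superheavy set must meet every $\ide$-heavy set, so it suffices to exhibit a $\ide$-heavy Lagrangian disjoint from $L$. But the execution has two real gaps.

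First, you aim to produce a \emph{superheavy} second Lagrangian (a monotone torus), which is both stronger than needed and not clearly available without FOOO-type input you do not supply. You only need a \emph{heavy} one, and the paper uses that in a monotone manifold any Lagrangian sphere is automatically $\ide$-heavy by \cite[Theorem~1.17]{EP09}. So the target reduces to: build a Lagrangian $S^{2}$ disjoint from $L$. You mention spherical Lagrangians in passing but then commit to the torus route, which is the harder one.

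Second, your mechanism is vague. Invoking Theorem~\ref{t:BPRL-sphere} (connectedness of packings in $M\setminus L$) moves balls around but does not by itself produce a disjoint Lagrangian. The paper's actual construction is Lemma~\ref{l:sdfRP}: given two orthogonal exceptional classes $E_{1},E_{2}$ of equal area with $\langle E_{i},[L]\rangle_{\Z}=0$, one realizes them by exceptional spheres disjoint from $L$ (Proposition~\ref{p:DfRP} and Corollary~\ref{c:EnoughE}), blows them down to two equal balls, builds a small Lagrangian sphere in class $E_{1}-E_{2}$ near the balls, and then inflates relative to the cut divisor $Z$ (via \cite{Bi99} and \cite[Proposition~1.2.9]{Mc12}) to return to the original cohomology class while keeping the sphere Lagrangian and disjoint from $Z$, hence from $L$.

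This also corrects your account of the thresholds: they are not about packing room. For $\RP^{2}$ one needs two orthogonal exceptional classes of equal area, forcing $k\ge 2$ (in the monotone case all exceptional classes have the same area). For a Lagrangian $S^{2}$ one first builds, via Lemma~\ref{l:sdfRP}, a pair of disjoint Lagrangian spheres $L_{1},L_{2}$ (which needs two exceptional classes orthogonal to some Lagrangian-sphere class, hence $k\ge 3$), and then uses the transitivity from Corollary~\ref{c:TTS2} together with \cite[Theorem~1.8]{LW12} to carry $L_{2}$ onto the given $L$, so that the image of $L_{1}$ is the desired disjoint sphere.
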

\noindent
In Proposition~\ref{p:pafRP} we build Lagrangian $\RP^{2}$'s in all $X_k$.
We also note that this has an interesting application to detecting non-displaceable toric fibers
in certain symplectic surfaces with ``semi-toric type" structures (see Section \ref{s:final remarks}).

The structure of the paper goes as follows.  In Section \ref{s:CBP} we recall the necessary tools from relative
symplectic packing as well as the symplectic cutting procedure that lets us switch between Lagrangians and divisors.  This allows us to prove Theorem~\ref{t:BPRL-sphere}.
In Section \ref{s:-4 spheres} we study symplectic $(-4)$-spheres,
which is related to Lagrangian $\RP^2$'s via symplectic cuts.  In Section \ref{s:Lag RP2} we provide proofs of
Theorem \ref{t:TTRP2} and Proposition \ref{p:KRP2}.  In Section \ref{s:del Pezzo} we prove Theorem~\ref{t:RPNS} and discuss the relation between symplectic packing and Floer-theoretic properties.

\subsubsection*{Acknowledgements}
The authors warmly thank Selman Akbulut, Josef Dorfmeister, Ronald Fintushel, Robert Gompf, Dusa McDuff,
and Leonid Polterovich for their interest in this work and many helpful correspondences.  Particular thanks is due to Dusa McDuff for generously sharing early versions of her paper \cite{Mc12} with us, which plays a key role in our arguments.  We would also like to thank the anonymous referee for various valuable comments, suggestions, clarifications,
and pointing us to the fact that Corollary~\ref{c:TTS2}(1) leads to a description of the Hamiltonian isotopy classes
of Lagrangian spheres in a compact symplectic manifold where there are Hamiltonian knotted Lagrangian spheres.

\section{Connectedness of ball packings}\label{s:CBP}

Given a symplectic manifold $(M^{2n}, \w)$
the space of \textit{symplectic ball packings of $M$ of size $\bar{\lambda} = (\lambda_1,\dots,\lambda_k)$} is the space of smooth embeddings
$$
    E_{\bar\lambda}(M,\w) :=\left\{\phi:\coprod_{i=1}^k B^{2n}(\lambda_i) \to M : \phi^{*}\w = \w_{std} \mbox{ and $\phi$ is injective}\right\}
$$
where $B^{2n}(\l) = \{z \in \C^{n} : \pi \abs{z}^{2} \leq \l\}$ and $\w_{std} = dx \wedge dy$.
In \cite{Mc91, Mc93} McDuff established the connection between symplectic packing and the symplectic blow-up.  Given a symplectic packing $\phi \in E_{\bar{\lambda}}(M, \w)$ performing the symplectic blow-up results in the symplectic form $\w_{\phi}$ on $M\# k\overline{\CP}^{n}$
where $[\w_{\phi}] = [\pi^{*}\w] - \sum_{i=1}^{k} \lambda_{i}\mbox{PD}(E_{i})$ where $\pi: M\# k\overline{\CP}^{n} \to M$
is the blow-down map and $\mbox{PD}(E_{i})$ is the Poincare dual to the exceptional class $E_{i}$ corresponding to the blow-up of $M$ at the $i$-th ball.  See \cite{Mc93, MP94} for more details.

Lalonde and McDuff in \cite{La94,LM2, Mc98} developed a method known as inflation which builds a symplectic deformation of $(M, \w)$ by adding a two-form dual to a symplectic submanifold $C^{2} \subset (M^{4}, \w)$.  In particular McDuff's \cite[Theorem 1.2]{Mc98}, and its generalization by Li--Liu \cite[Proposition 4.11]{LL01}, proves if $(M^{4}, \w)$ is a closed symplectic manifold with $b_{2}^{+} = 1$, then any deformation between cohomologous symplectic forms is homotopic with fixed endpoints to an isotopy of symplectic forms. This homotopy is done by inflation along one parameter families of embedded holomorphic curves, whose existence is given by Taubes--Seiberg--Witten theory \cite{Ta96, Mc97}.  Using the relation between symplectic packings and symplectic forms on the blow-up,
McDuff \cite[Corollary 1.5]{Mc98} used that deformation implies isotopy to prove the space of symplectic packings is connected when $b_{2}^{+}(M) = 1$.

\subsection{In the complement of a symplectic submanifold}

Consider now the relative setting, where $Z \subset (M^{4}, \w)$ is
a closed embedded symplectic surface and one considers symplectic
packings $E_{\bar{\lambda}}(M\backslash Z, \w)$ in the complement of
$Z$. Biran \cite[Lemma 2.1.A]{Bi99} worked out how to inflate a
symplectic form along certain symplectic surfaces $C \subset M$ that
intersects $Z$ positively so that $Z$ stays a symplectic submanifold
through the deformation, so extending the connectedness of ball
packing just requires one to find the appropriate holomorphic curves
to inflate along.  When the Seiberg--Witten degree of $Z$ is
non-negative $d(Z) = c_{1}(Z) + Z^{2} \geq 0$, then $Z$ will be a
$J$-holomorphic curve for regular almost complex structures on $M$
so McDuff's argument in \cite[Corollary 1.5]{Mc98} generalizes
immediately.  

The situation is more delicate when $d(Z) < 0$, but
recent work of McDuff \cite{Mc12} builds the appropriate curves in
certain cases and leads to the following proposition, which is
implicit in \cite{Mc12}.  We have followed \cite[Corollary 1.5]{Mc98}
where McDuff proves the connectedness of ball packings in the absolute case when $b_{2}^{+}(M) = 1$.
Note that two packings $\phi_{0}, \phi_{1} \in E_{\bar{\l}}(M, \w)$ are connected if and only if
there is a symplectomorphism $F \in \Symp_{0}(M, \w)$ in the identity component of $\Symp(M, \w)$
such that $F\circ \phi_{0} = \phi_{1}$.

\begin{prop}\label{p:BPRS}
    Let $(W^{4}, \w)$ be a closed rational or ruled symplectic $4$-manifold and let
    $Z \subset W$ be a closed symplectic sphere, then
    the space of symplectic packing $E_{\bar\lambda}(W\backslash Z, \w)$ is connected.
\end{prop}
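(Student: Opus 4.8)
The plan is to follow McDuff's argument from \cite[Corollary 1.5]{Mc98} in the relative setting, translating the connectedness question about ball packings into a statement about symplectic forms on the blow-up and then invoking the relative "deformation implies isotopy" machinery that McDuff establishes in \cite{Mc12}. First I would reduce to the case where all radii $\lambda_i$ are small and rational, and identify a path $\phi_t$ between two packings $\phi_0,\phi_1\in E_{\bar\lambda}(W\setminus Z,\w)$ with data on the $k$-fold blow-up $\pi:W\#k\overline{\CP}^2\to W$: performing the symplectic blow-up along the images of $\phi_0$ and $\phi_1$ produces two cohomologous symplectic forms $\w_{\phi_0},\w_{\phi_1}$ in the class $[\pi^*\w]-\sum_i\lambda_i\mathrm{PD}(E_i)$, and in both cases the proper transform $\wt Z$ of $Z$ is a symplectic sphere disjoint from the exceptional divisors (since the balls are packed in $W\setminus Z$). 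Conversely, by McDuff's identification of ball packings with blow-up forms \cite{Mc91,Mc93}, connectedness of $E_{\bar\lambda}(W\setminus Z,\w)$ (in the sense recorded before the statement, i.e.\ the $\Symp_0$-orbit structure) will follow once we produce a path of symplectic forms from $\w_{\phi_0}$ to $\w_{\phi_1}$ through forms keeping $\wt Z$ and the exceptional classes symplectic, and then blow down.

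The core input is the relative analogue of "deformation implies isotopy." Since $W$ is rational or ruled, $b_2^+(W)=1$, hence $b_2^+$ of the blow-up is still $1$, and any two cohomologous symplectic forms on it are deformation equivalent through forms in which $\wt Z$ stays symplectic — because $\wt Z$ is a sphere, its class is preserved and the deformation can be arranged rel $\wt Z$. Then one applies relative inflation: Biran's technique \cite[Lemma 2.1.A]{Bi99} shows one can inflate along symplectic curves meeting $\wt Z$ nonnegatively while keeping $\wt Z$ symplectic, and when the Seiberg--Witten degree $d(\wt Z)=c_1(\wt Z)+\wt Z^2\ge 0$ this is immediate from Taubes--Seiberg--Witten theory as in \cite{Mc98}. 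The delicate case $d(\wt Z)<0$ — which genuinely occurs here, e.g.\ when $Z$ is a $(-4)$-sphere relevant to the $\RP^2$ applications — is exactly what McDuff's paper \cite{Mc12} handles: she constructs the needed one-parameter families of embedded $J$-holomorphic curves in the complement of such negative spheres. I would therefore cite \cite{Mc12} for the existence of the inflating curves and otherwise reproduce McDuff's deformation-to-isotopy scheme verbatim in the relative category.

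Assembling this, the argument runs: (i) normalize the radii; (ii) pass to the blow-up $W\#k\overline{\CP}^2$, obtaining $\w_{\phi_0},\w_{\phi_1}$ cohomologous with $\wt Z$ symplectic and disjoint from the exceptional divisors; (iii) connect them by a deformation of symplectic forms, keeping $\wt Z$ symplectic, using $b_2^+=1$; (iv) upgrade this deformation to an isotopy (a path with fixed cohomology class) via relative inflation, where the required families of holomorphic curves come from \cite{Mc98} when $d(\wt Z)\ge 0$ and from \cite{Mc12} when $d(\wt Z)<0$; (v) by Moser, the isotopy of forms is realized by an ambient symplectic isotopy $\Psi_t$ fixing $\wt Z$ setwise and carrying the exceptional divisors of $\w_{\phi_0}$ to those of $\w_{\phi_1}$; (vi) blow down to recover an element of $\Symp_0(W,\w)$ taking $\phi_0$ to $\phi_1$ up to the expected reparametrization of balls. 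The main obstacle — and the reason the proposition is only "implicit" in \cite{Mc12} rather than stated outright — is step (iv) in the regime $d(\wt Z)<0$: one must verify that McDuff's construction of the inflating curves in \cite{Mc12} applies to \emph{all} the spheres $\wt Z$ that arise (i.e.\ for the full range of negative self-intersections and homology classes occurring for symplectic spheres in rational or ruled $W$), and that the inflation can be performed rel the exceptional divisors so that blowing down is legitimate; once that bookkeeping is in place, the rest is a routine transcription of McDuff's absolute-case argument.
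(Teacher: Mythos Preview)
Your overall strategy matches the paper's: pass to the blow-up, connect the two blow-up forms by a deformation keeping $Z$ symplectic, upgrade to an isotopy using McDuff's relative machinery \cite{Mc12}, and blow down. The paper in fact invokes \cite[Proposition~1.2.9]{Mc12} as a black box producing a compactly supported isotopy $\{\bar F_t\}$ in $\Diff_0(\overline W\setminus Z)$ with $\bar F_1^*\bar\w_1=\bar\w_0$, rather than re-arguing the case split on $d(\wt Z)$ as you do; your concern that one must check \cite{Mc12} covers all the relevant spheres is legitimate, but the paper simply defers this to that reference.

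There is, however, a genuine gap in your step~(vi), and your step~(i) does not address it. The Moser isotopy $\Psi_t$ on the blow-up does \emph{not} blow down for intermediate $t$: the exceptional divisors move, so only the endpoint $\Psi_1$ descends to a symplectomorphism $F$ of $(W,\w)$. You therefore get $F\in\Symp^c(W\setminus Z,\w)$, but nothing yet says $F\in\Symp_0^c(W\setminus Z,\w)$, which is what connectedness of $E_{\bar\lambda}(W\setminus Z,\w)$ requires. Reducing to small or rational radii does not help here. The paper (following \cite[Corollary~1.5]{Mc98}) handles this with a family argument: first apply an element of $\Symp_0^c(W\setminus Z,\w)$ so that $\phi_0$ and $\phi_1$ agree on the sub-balls $\coprod B^4(a\lambda_i)$ for some small $a>0$; then run the entire blow-up/deformation/isotopy/blow-down construction for each scaling parameter $a\in(0,1]$, producing a continuous family $F^{(a)}\in\Symp^c(W\setminus Z,\w)$ with $F^{(1)}=F$. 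Since the packings coincide on small balls, $F^{(a)}=\id$ for $a$ near $0$, and this path exhibits $F\in\Symp_0^c(W\setminus Z,\w)$.
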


\begin{proof}
    Given two packings
    $\phi_{0}, \phi_{1}: \coprod_{i=1}^k B^{4}(\lambda_i) \to (W\backslash Z, \w)$
    by applying an element of $\Symp_{0}^{c}(W \backslash Z, \w)$, the 
    identity component of the group of compactly supported symplectomorphisms
    $\Symp^{c}(W \backslash Z, \w)$, we may assume that $\phi_{0} = \phi_{1}$ as maps
    when restricted to $\coprod_{i=1}^{k} B^{4}(a\lambda_{i})$ for $a > 0$ sufficiently small.
    
    Let $\bar{\w}_{0}$ and
    $\bar{\w}_{1}$ be the
    symplectic forms on the blow-up
    $\overline{W} = W \# k \overline{\CP}^{2}$
    associated to the ball packings $\phi_{0}$ and $\phi_{1}$.
    Pick a deformation of symplectic forms $\bar{\w}_{t}$
    on $\overline{W}$ as in the proof of \cite[Corollary 1.5]{Mc98}
    such that $\bar{\w}_{t}$ is constant in a neighborhood of $Z$ and in
    $H^{2}(\overline{W}; \R)$
    $$
        [\bar{\w}_{t}] = [\pi^{*}\w] - \sum_{i=1}^{d} (\lambda_{i}-\rho_i(t))\mbox{PD}(E_{i})
    $$
    where $\pi: \overline{W} \to W$ is the natural blow-down map, $\mbox{PD}(E_{i}) \in H^{2}(\overline{W}; \Z)$ is Poincare dual to the exceptional class $E_{i}$ associated to the exceptional
    divisor $e_{i} \subset \overline{W}$, and $\rho_i: [0, 1] \to [0, \lambda_{i})$ are smooth functions
    equal to $0$ in a neighborhood of $t=0,1$.
    By \cite[Proposition 1.2.9]{Mc12} there is a compactly supported isotopy $\{\bar{F}_{t}\}_{t}$
    in $\Diff_{0}(\overline{W}\backslash Z)$ such that $\bar{F}_{0} = \mbox{id}$ and
    $\bar{F}_{1}^{*}\bar{\w}_{1} = \bar{\w}_{0}$.  
    
    From here the proof proceeds exactly as in \cite[Corollary 1.5]{Mc98}.
    By blowing down $\bar{F}_{1}$ induces a symplectomorphism $F \in \Symp^{c}(W\backslash Z, \w)$ so that
    $F \circ \phi_{0} = \phi_{1}$, so to finish the proof it suffices to show
    $F \in \Symp_{0}^{c}(W\backslash Z, \w)$.  Note
    that the construction of $F = F^{(1)}$
    can be done in
    a family $F^{(a)} \in \mbox{Symp}^{c}(W\backslash Z, \w)$
    by starting the construction with respect to $\phi_{0}$ and $\phi_{1}$ being restricted to the domain
    $\coprod_{i=1}^{k} B^{4}(a\lambda_{i})$ for $a \in (0, 1]$.
    Since we assumed $\phi_{0}$ and $\phi_{1}$ are equal on sufficiently small balls,
    $F^{(a)} = \id$ for $a$ close to zero and hence
     $F \in \Symp_{0}^{c}(W\backslash Z, \w)$.
\end{proof}


\subsection{In the complement of a spherical Lagrangian}\label{s:SCBP}

In this subsection we will prove Theorem \ref{t:BPRL-sphere}.
The simple but key observation is to translate to a relative
setting with a symplectic sphere, which allows us to apply Proposition~\ref{p:BPRS}.
This translation will be done using the following construction.

\subsubsection{The symplectic cutting construction}

Let $(M^{2n}, \w)$ be a symplectic manifold and let $L \subset M$ be
a closed Lagrangian admitting a metric with periodic geodesic flow.
Let $\cN$ be a Weinstein neighborhood of
$L \subset M$, then by using the periodic geodesic flow we can
perform a symplectic cut \cite{Le95} along $\partial\cN$.  This cut
creates a pair of new symplectic manifolds $W^{+}_{\cN}$ and $W^{-}_{\cN}$ each having
$Z_{\cN}$ as a codimension $2$ symplectic submanifold, where the Euler classes of $Z_{\cN}$'s normal bundles in $W^{\pm}_{\cN}$ satisfy $e(\nu^{+}) = -e(\nu^{-}) \in H^{2}(Z_{\cN})$.  The manifolds are
$$
\mbox{$W^{+}_{\cN} = (M\backslash \Int\cN)/\sim$ \quad $W^{-}_{\cN} = \cN/\sim$
\quad $Z_{\cN} = \d\cN/\sim$}
$$
where the equivalence relations are given by identifying point on $\d\cN$ that lie on the same geodesic,
and the symplectic manifold $M$ can be recovered by taking the symplectic sum \cite{Go95}
of $W^{\pm}_{\cN}$ along $Z_{\cN}$
\begin{equation}\label{e:sum}
    M = W^{+}_{\cN}\, \#_{Z_{\cN}} W^{-}_{\cN}.
\end{equation}

\begin{remark}\label{r:names}
Let us record what arises when $L$ is a Lagrangian $S^{2}$ or $\RP^{2}$.
\begin{enumerate}[(1)]
\item Lagrangian $S^{2}$: We have $W^{-} = (S^{2} \times S^{2}, \sigma \oplus \sigma)$ with
$L \subset W^{-}$ being the anti-diagonal Lagrangian sphere, where $Z \subset W^{-}$ is the diagonal symplectic sphere and $Z \subset W^{+}$ is a $(-2)$ symplectic sphere.
\item Lagrangian $\RP^{2}$: We have $W^{-} = \CP^{2}$ with $L \subset W^{-}$ being the the standard
$\RP^{2}$, where $Z \subset W^{-}$ is the quadric $Q = \{z_{0}^{2} + z_{1}^{2} + z_{2}^{2} = 0\}$ and
$Z \subset W^{+}$ is a $(-4)$ symplectic sphere.
\end{enumerate}
For more details and other examples see \cite{Au07}.
\end{remark}

In \cite[Theorem 1.1]{HK99} Hausmann--Knutson determined the effect of symplectic cutting in general on the rational cohomology ring and it leads to the following lemma.  Note that \cite[Theorem 1.1]{HK99} assumes the symplectic cut is by a global Hamiltonian $S^{1}$-action, which need not be true in our case.
However since $H^{*}(M; \Q) \to H^{*}(\cN; \Q)$ is surjective if $\cN \subset M$ is a Weinstein neighborhood for a Lagrangian $S^{2}$ or $\RP^{2}$, the proof of \cite[Theorem 1.1]{HK99} still applies in our case.

\begin{lem}\label{l:b2+}
    Let $(M^{4}, \w)$ be a closed symplectic manifold with $L \subset M$ a Lagrangian
    $S^{2}$ or $\RP^{2}$ and let $W^{+}_{\cN}$ be the symplectic manifold built by cutting out
    a Weinstein neighborhood $\cN$ of $L$, then we have the following:
    \begin{enumerate}[(1)]
    \item $b_{2}^{+}(W^{+}_{\cN}) = b_{2}^{+}(M)$.
    \item If $L = S^{2}$, then $b_{2}^{-}(W^{+}_{\cN}) = b_{2}^{-}(M)$.  If $L = \RP^{2}$, then
    $b_{2}^{-}(W^{+}_{\cN}) = b_{2}^{-}(M) + 1$.
    \item If $M$ is rational or ruled, then $W^{+}_{\cN}$ is rational or ruled respectively.
\end{enumerate}
\end{lem}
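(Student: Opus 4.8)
The plan is to compute the rational cohomology of $W^{+}_{\cN}$ via the Hausmann--Knutson description of symplectic cutting, and then extract the Betti number statements (1) and (2); statement (3) will follow from (1)--(2) together with the classification of rational and ruled $4$-manifolds. First I would set up the two pieces: $Z_{\cN}$ is a symplectic sphere sitting inside both $W^{+}_{\cN}$ and $W^{-}_{\cN}$, and by Remark~\ref{r:names} we know $W^{-}_{\cN}$ explicitly --- it is $S^2 \times S^2$ (for $L = S^2$) or $\CP^2$ (for $L = \RP^2$), with $Z_{\cN}$ being the diagonal (a $(+2)$-sphere) or the quadric (a $(+4)$-sphere) respectively. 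The key input is that $H^*(M;\Q) \to H^*(\cN;\Q)$ is surjective when $\cN$ is a Weinstein neighborhood of a Lagrangian $S^2$ or $\RP^2$ (since $H^*(\cN;\Q) = H^*(L;\Q)$ is generated in degrees $0$ and, for $S^2$, degree $2$, and in the $S^2$ case the class dual to $L$ hits the generator while for $\RP^2$ the rational cohomology of $L$ is trivial in positive degrees), so the proof of \cite[Theorem 1.1]{HK99} goes through even though our cut is only locally Hamiltonian. This is noted in the paragraph preceding the lemma.

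Next I would invoke \cite[Theorem 1.1]{HK99} to write down $H^*(W^{+}_{\cN};\Q)$ in terms of $H^*(M;\Q)$, $H^*(Z_{\cN};\Q)$ and $H^*(W^{-}_{\cN};\Q)$; concretely it gives a long exact Mayer--Vietoris-type sequence (or an explicit additive formula) relating these, from which one reads off $b_i(W^{+}_{\cN})$. Carrying this out, $b_2(M) = b_2(W^{+}_{\cN}) + b_2(W^{-}_{\cN}) - b_0(Z_{\cN}) - b_2(Z_{\cN})$ up to the correction terms recorded in \cite{HK99}; plugging in $b_2(S^2\times S^2) = 2$ and $b_2(\CP^2) = 1$, and $b_0(Z) = b_2(Z) = 1$, gives $b_2(W^{+}_{\cN}) = b_2(M)$ in the $S^2$ case and $b_2(W^{+}_{\cN}) = b_2(M) + 1$ in the $\RP^2$ case. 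For the signature/intersection-form refinement needed for (1) and (2), I would track the intersection form across the cut: the symplectic sum \eqref{e:sum} decomposes, up to the homology classes coming from $Z_{\cN}$, as an orthogonal sum of (most of) the intersection lattices of $W^{\pm}_{\cN}$, and the classes that get killed or identified involve $[Z_{\cN}]$, whose self-intersection is $+2$ in $W^{-}$ and $-2$ in $W^{+}$ (resp.\ $+4$ and $-4$). Since $[Z_{\cN}]^2 < 0$ in $W^{+}_{\cN}$, adding or removing the summand spanned by $Z_{\cN}$ only affects $b_2^{-}$, never $b_2^{+}$; this yields $b_2^{+}(W^{+}_{\cN}) = b_2^{+}(M)$ in both cases, $b_2^{-}$ unchanged for $L = S^2$, and $b_2^{-}$ increased by one for $L = \RP^2$ (the extra negative class being $[Z_{\cN}]$ itself, which in $S^2\times S^2$ was already ``accounted for'' by the diagonal on the other side, whereas in $\CP^2$ the diagonal analogue was positive).

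Finally, for (3): a closed symplectic $4$-manifold with $b_2^{+} = 1$ is, by Taubes--Seiberg--Witten theory together with the classification results invoked in the paper, either rational or ruled, and $W^{+}_{\cN}$ has $b_2^{+} = b_2^{+}(M) = 1$ in exactly the cases where $M$ is rational or ruled with $b_2^{+}=1$; the remaining ruled case ($M$ an irrational ruled surface, still $b_2^{+} = 1$) is handled identically, while distinguishing ``rational'' from ``ruled'' for $W^{+}_{\cN}$ follows from matching $b_1$ (unchanged by the cut since $H^1$ of the relevant pieces vanishes or matches) and the fact that blowing down $W^{+}_{\cN}$ along exceptional spheres disjoint from $Z_{\cN}$ reaches the same minimal model as $M$. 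I expect the main obstacle to be making the bookkeeping of the intersection form across the symplectic sum fully rigorous --- in particular verifying carefully that no positive class of $M$ is lost and that the one new class in the $\RP^2$ case is genuinely negative rather than just non-positive --- rather than any conceptual difficulty; the cohomology-ring computation from \cite{HK99} does most of the work, and the signature refinement is the delicate add-on.
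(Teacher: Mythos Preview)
Your approach to (1) and (2) via \cite{HK99} is essentially the paper's: the paper records that for $L=S^2$ one gets $H^*(W^+_{\cN};\Q)\cong H^*(M;\Q)$ as rings, while for $L=\RP^2$ the intersection forms split as $Q_{W^+_{\cN}} = (-1)\oplus Q_M$ with $\mathrm{PD}(Z_{\cN})$ identified with $2h$ in the $\overline{\CP}^2$ summand. Your more hands-on bookkeeping of classes across the sum would arrive at the same conclusion.

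For (3), however, there is a genuine gap. The assertion that a closed symplectic $4$-manifold with $b_2^+=1$ must be rational or ruled is false: Dolgachev surfaces (e.g.\ $E(1)_{2,3}$) and other exotic symplectic structures on manifolds homeomorphic to $\CP^2\#9\overline{\CP}^2$ have $b_2^+=1$ but symplectic Kodaira dimension $\geq 0$, hence are neither rational nor ruled. So you cannot conclude from (1) alone that $W^+_{\cN}$ is rational or ruled. Your fallback of ``blowing down exceptional spheres disjoint from $Z_{\cN}$ to reach the same minimal model as $M$'' is circular without already knowing $W^+_{\cN}$ has negative Kodaira dimension.

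The paper's argument for (3) goes through the symplectic Kodaira dimension instead: since $M = W^+_{\cN}\,\#_{Z_{\cN}} W^-_{\cN}$ is a symplectic sum along a sphere, Dorfmeister's result \cite[Theorem 1.1]{Do10a} gives $\kappa(W^+_{\cN})\leq \kappa(M)$, and by \cite{Liu96} having $\kappa=-\infty$ is equivalent to being rational or ruled. Then the isomorphism $H_1(W^+_{\cN};\Q)\cong H_1(M;\Q)$ from \cite{HK99} distinguishes rational from ruled. This is the missing ingredient you need.
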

\begin{proof}
    For (1) and (2):
    If $L$ is a Lagrangian $S^{2}$, then $H^{*}(W^{+}_{\cN}; Q) \cong H^{*}(M; \Q)$ as rings by \cite{HK99}.
       If $L$ is a Lagrangian $\RP^{2}$,
       then by \cite{HK99}
       the intersection forms are related by
    \begin{equation}\label{e:Q}
        Q_{W^{+}_{\cN}} = Q_{\overline{\CP}^{2}} \oplus Q_{M} = (-1) \oplus Q_{M}
    \end{equation}
    where
    $PD(Z_{\cN}) \in H^{2}(W^{+}_{\cN};\Q)$ is identified with
    $2h \in H^{2}(\overline{\CP}^{2};\Q)$.

        For (3): Let $\kappa$ denote the symplectic Kodaira dimension, then by
    \cite[Theorem 1.1]{Do10a} and \eqref{e:sum} if follows that $\kappa(W^{+}_{\cN}) \leq \kappa(M)$.
    For closed $4$-dimensional symplectic manifolds having
    $\kappa = -\infty$ is equivalent to being rational or ruled
    \cite{Liu96}.
    Since \cite{HK99} gives $H_{1}(W^{+}_{\cN}; \Q) \cong H_{1}(M;\Q)$ it follows that
    $W^{+}_{\cN}$ is rational if $M$ is rational, and likewise for ruled.
\end{proof}
\noindent
In the case of a Lagrangian sphere, Lemma~\ref{l:b2+} can also be proved by noting that $W^{+}_{\cN}$ is a symplectic deformation of $M$.

\subsubsection{Proving Theorem \ref{t:BPRL-sphere}}

In the setting of the symplectic cutting construction, by design $W^{+}_{\cN}\backslash Z_{\cN}$ is symplectomorphic to $M\backslash \cN$.  Therefore we immediately have the following observation on symplectic packings of $M\backslash L$.

\begin{lem}\label{l:RLRS}
    The space of ball packings
    $E_{\bar\lambda}(M\backslash L)$ in $M\backslash L$
    is connected if and only if
    the space of ball packings
    $E_{\bar\lambda}(W^{+}_{\cN} \backslash Z_{\cN})$ in $W^{+}_{\cN} \backslash Z_{\cN}$
    is connected for all sufficiently small Weinstein neighborhoods
    $\cN$ of $L \subset M$.
\end{lem}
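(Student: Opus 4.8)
The plan is to exploit the fact, noted just before the lemma, that the symplectic cutting construction produces a symplectomorphism $W^{+}_{\cN}\backslash Z_{\cN}\cong M\backslash\cN$, and to combine this with the observation that the inclusion $M\backslash\cN\hookrightarrow M\backslash L$ is, up to symplectomorphism, an exhaustion of $M\backslash L$ as $\cN$ shrinks. First I would make precise the identification coming from the symplectic cut: by construction $W^{+}_{\cN}=(M\backslash\Int\cN)/\!\sim$, and the equivalence relation collapses only $\partial\cN$ onto $Z_{\cN}$, so removing $Z_{\cN}$ from $W^{+}_{\cN}$ gives back $\Int(M\backslash\Int\cN)=M\backslash\ov{\cN}$ with its original symplectic form; fixing a symplectomorphism $\Psi_{\cN}:W^{+}_{\cN}\backslash Z_{\cN}\xrightarrow{\ \sim\ }M\backslash\ov{\cN}$ then puts the two packing spaces in bijection via $\phi\mapsto\Psi_{\cN}\circ\phi$, which is clearly a homeomorphism $E_{\bar\lambda}(W^{+}_{\cN}\backslash Z_{\cN})\cong E_{\bar\lambda}(M\backslash\ov{\cN})$. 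So the lemma reduces to the statement that $E_{\bar\lambda}(M\backslash L)$ is connected if and only if $E_{\bar\lambda}(M\backslash\ov{\cN})$ is connected for all sufficiently small Weinstein neighborhoods $\cN$.

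Next I would handle this reduced statement in two directions. For the ``only if'' direction: since $\ov{\cN}\subset M$ is a neighborhood of $L$, any packing of $M\backslash\ov{\cN}$ is a packing of $M\backslash L$; conversely, given any finite ball packing $\phi$ of $M\backslash L$, its image is a compact subset disjoint from $L$, hence contained in $M\backslash\ov{\cN}$ for every sufficiently small $\cN$ (here ``sufficiently small'' may be taken to depend on $\phi$, but a path of packings has compact image so one threshold works for the whole path). Thus $E_{\bar\lambda}(M\backslash\ov{\cN})$ exhausts $E_{\bar\lambda}(M\backslash L)$ in the sense that every packing, and every path of packings, in $M\backslash L$ lies in $E_{\bar\lambda}(M\backslash\ov{\cN})$ for $\cN$ small. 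It follows that if each $E_{\bar\lambda}(M\backslash\ov{\cN})$ is connected then so is $E_{\bar\lambda}(M\backslash L)$: any two packings $\phi_{0},\phi_{1}$ of $M\backslash L$ both lie in some common $E_{\bar\lambda}(M\backslash\ov{\cN})$, where they are joined by a path, which is also a path in $E_{\bar\lambda}(M\backslash L)$.

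For the ``if'' direction I would use the uniqueness of Weinstein neighborhoods together with the fact that a compact family of Lagrangian embeddings can be isotoped off a small neighborhood. Suppose $E_{\bar\lambda}(M\backslash L)$ is connected and fix a small Weinstein neighborhood $\cN$; given $\phi_{0},\phi_{1}\in E_{\bar\lambda}(M\backslash\ov{\cN})\subset E_{\bar\lambda}(M\backslash L)$, connect them by a path $\{\phi_{t}\}$ in $E_{\bar\lambda}(M\backslash L)$. The union $\bigcup_{t}\mathrm{im}\,\phi_{t}$ is compact and disjoint from $L$, hence disjoint from a possibly smaller Weinstein neighborhood $\cN'\subset\cN$; by the Weinstein neighborhood theorem there is an ambient symplectic isotopy of $M$, supported in $\cN$ and fixing $M\backslash\cN$, carrying $\cN'$ onto $\cN$ — more carefully, one scales the Weinstein model $\cN_{s}$ and obtains $\psi\in\Symp_{0}(M,\w)$ with $\psi(M\backslash\ov{\cN})\supset M\backslash\ov{\cN'}$ and $\psi=\mathrm{id}$ away from $\cN$, so that $\psi^{-1}\circ\cdot$ pushes the path $\{\phi_{t}\}$ into $E_{\bar\lambda}(M\backslash\ov{\cN})$; since $\psi$ is isotopic to the identity through symplectomorphisms fixing $\phi_{0}$ and $\phi_{1}$ (which already avoid $\ov{\cN}$, and we may take the isotopy supported in $\cN\backslash\ov{\cN'}$ where $\phi_0,\phi_1$ map nothing), the modified endpoints agree with the originals. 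This yields a path in $E_{\bar\lambda}(M\backslash\ov{\cN})$ joining $\phi_{0}$ to $\phi_{1}$. The main obstacle is this last direction: one must arrange the ambient isotopy absorbing the gap between $\cN'$ and $\cN$ so that it genuinely fixes the endpoint packings, which is where the scaling structure of the Weinstein model (periodic geodesic flow normal form) is used; everything else is formal point-set topology plus the symplectomorphism $\Psi_{\cN}$ from the cut.
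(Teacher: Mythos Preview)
Your reduction via the symplectomorphism $\Psi_{\cN}$ and the compactness/exhaustion argument is exactly the content the paper has in mind; the paper in fact gives no proof beyond the sentence ``by design $W^{+}_{\cN}\backslash Z_{\cN}$ is symplectomorphic to $M\backslash\cN$,'' and your second paragraph fleshes this out correctly. Note, however, that you have swapped the labels: what you call the ``only if'' direction is actually the ``if'' direction (connectedness for all small $\cN$ implies connectedness in $M\backslash L$), and vice versa. This ``if'' direction is the only one used in the paper (in the proof of Theorem~\ref{t:BPRL-sphere}), and your argument for it is fine.

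There is a genuine gap in your third paragraph (the true ``only if'' direction). The symplectomorphism $\psi$ you describe cannot exist: you ask for $\psi\in\Symp_{0}(M,\w)$ supported in $\cN$ (hence the identity on $M\backslash\cN$, in particular on $M\backslash\ov{\cN}$) with $\psi(M\backslash\ov{\cN})\supset M\backslash\ov{\cN'}$. But then $\psi(M\backslash\ov{\cN})=M\backslash\ov{\cN}$, which forces $\ov{\cN}\subset\ov{\cN'}$, contradicting $\cN'\subsetneq\cN$. Equivalently, a symplectomorphism is volume-preserving, so it cannot carry the smaller set $\ov{\cN'}$ onto the strictly larger $\ov{\cN}$; ``scaling the Weinstein model'' is a Liouville (conformally symplectic) flow, not a symplectic one. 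To salvage this direction one must argue differently---for instance, by lifting the path $\{\phi_{t}\}$ to an ambient Hamiltonian isotopy of $M$ and then cutting off the generating Hamiltonian away from the (compact) image of the balls so that the modified isotopy never carries the balls across $\partial\cN$, or by noting that this direction is not actually needed for the applications.
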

\noindent
This in turn leads to the proof of Theorem~\ref{t:BPRL-sphere}.
\begin{proof} [Proof of Theorem~\ref{t:BPRL-sphere}]
    Let $(W^{+}_{\cN}, \w)$ be the result of cutting out a Weinstein neighborhood $\cN$ of $L$ in $M$,
    with $Z_{\cN} \subset W^{+}_{\cN}$ being the resulting symplectic sphere.
    By Lemma~\ref{l:b2+} we have that
    $W^{+}_{\cN}$ is rational or ruled,
    so by Proposition~\ref{p:BPRS}
    the space of ball packings in $W^{+}_{\cN}\backslash Z_{\cN}$ is connected.
    The theorem now follows from Lemma~\ref{l:RLRS}.
\end{proof}



\section{Symplectic $(-4)$-spheres in rational manifolds}\label{s:-4 spheres}

In this section we provide a classification of a special type of
classes which can be represented by symplectic $(-4)$-spheres in
symplectic rational manifold $(W^{4}, \w)$.  This will be crucial to our study of Lagrangian
$\RP^2$'s due to the symplectic cutting construction.

We first briefly establish some notation.  If $M = S^{2} \times S^{2}$, then we will let $A,B \in H_{2}(M;\Z)$ be the homology classes for each factor.
If $M = \CP^{2}\#k\overline{\CP}^{2}$, we will represent its second homology classes by the
basis $\{H, E_1,\dots,E_k\}$, where $H$ is the line class and $E_i$
are orthogonal exceptional classes.  Denote the class 
$$K_{0} = -3H+E_1+\cdots+E_k$$ and by
\cite[Theorem 1]{LL01} we may always assume that this is the canonical class
for symplectic rational manifolds. Define
the \textit{$K_0$-exceptional classes} as the spherical classes $C$ satisfying $K_0\cdot C=-1$,
and by \cite{LL01} these classes must be represented by an symplectic exceptional sphere.

There is a special kind of transformation on the second homology group of rational manifolds called the \textit{Cremona
transform}, which are reflections
$$
    A \mapsto A + (A\cdot L_{ijk})\, L_{ijk} \quad\mbox{where}\quad L_{ijk} = H-E_i-E_j-E_k \mbox{ for $i>j>k$}
$$
that preserve the class $K_{0}$.  When $k\leq 3$, we also include the reflection with respect to $L_{ij}=E_i-E_j$.
Cremona transformations can
always be realized by diffeomorphisms \cite{Li02} (explicitly this can be done by a smooth version of Seidel's
Dehn twist).  See \cite{MSc12, LW12} for
more detailed discussions on these transformations.  If two classes are connected by a series of Cremona transforms,
we say they are \textit{Cremona equivalent}.


Let $Z\subset W$ be a symplectic $(-4)$-sphere, then the
\textit{rational blow-down along} $Z$ is the symplectic manifold built by
performing a symplectic sum \cite{Go95} of $W$ along $Z$ with the standard quadric $Q$ in $\CP^2$
and is denoted $M = W_{Z}\#_Q\CP^2$.  This operation is the inverse of the symplectic cutting
construction from Section~\ref{s:SCBP} in the case of a Lagrangian $\RP^{2} \subset M$.

\brmk
    Unlike the symplectic cutting operation that keeps the manifold rational or ruled in our case by
    Lemma~\ref{l:b2+}, the same is not true when we blow-down $(-4)$ symplectic spheres.
    For example the class $2K_{0} = 6H-2(E_1+\dots+E_{10})$
    is represented by an embedded $(-4)$-symplectic sphere
    for an appropriate symplectic form on $\CP^{2}\# 10\overline{\CP}^{2}$,
        but the symplectic blow-down is not a rational manifold \cite[Section 4.2]{Do10b}.
\ermk

\begin{prop}\label{p:C-4C}
      Let $(W^{4}, \w)$ be a symplectic rational manifold
      and $Z\subset W$ be an embedded symplectic
      $(-4)$-sphere.  If $W = S^{2} \times S^{2}$, then $[Z] = A-2B$ or $B-2A$.
      If $W = \CP^{2}\#k\overline{\CP}^{2}$, then $k\geq 2$ and furthermore $[Z] \in H_{2}(W, \Z)$
      is Cremona equivalent to $-H + 2E_{1} - E_{2}$ provided
      the rational blow-down $M$
      of $W$ along $Z$ is a rational manifold.
\eprop

\begin{proof}
     A direct computation with the adjunction formula implies all but
     the case where $W = \CP^{2} \# k \overline{\CP}^{2}$ with $k \geq 4$.
     From the relation between rational blow-down and symplectic cutting
     from Section~\ref{s:SCBP},  it follows from equality \eqref{e:Q} that
    $b^-_{2}(W) = 1+ b^-_{2}(M)$.
    It follows that $M$ is not minimal since it is rational and
    $b^-_{2}(M) = k-1 \geq 3$.

By \cite[Theorem 1.2]{Do10b} it follows that either
          \begin{enumerate}[(i)]
             \item there exists a symplectic exceptional sphere disjoint
             from $Z$, or
             \item there exist two disjoint symplectic exceptional spheres
             $C_{1}$ and $C_{2}$
             each intersecting $Z$ exactly once and positively.
          \end{enumerate}
     Therefore we can assume after blowing down some number of exceptional
     spheres disjoint from $Z$ that the resulting manifold $\overline{W}$
     is minimal or (ii) occurs for $Z \subset \overline{W}$.

     If we reach a minimal manifold then $\overline{W}=S^2\times S^2$, since the other possibility,
     $\CP^{2}$, does not have a symplectic $(-4)$-curve.
     So $[Z]$ is either $A-2B$ or $B-2A$ in $H_{2}(\overline{W};\Z)$ and
     since we needed to have blow-down at least $3$ exceptional spheres in $W$ to reach $\overline{W}$,
     by blowing back up to $W$ it follows that the classes $A-2B$ and $B-2A$ can be
     represented in the form of part (ii) in our assertion, by an appropriate change of basis from classes $A,B$
     to the standard basis consisting of $\{H, E_1,\dots, E_k\}$.

    Assume now (ii) occurs for $Z \subset W$, where $W$ has no exceptional spheres disjoint from
    $Z$.
    From the classification of exceptional classes
    in rational manifolds up to Cremona equivalence \cite[Proposition 1.2.12]{MSc12},
    \cite[Corollary 4.5]{LW12}, any exceptional class is Cremona equivalent to $E_i$ for some $i$.
    Also since Cremona transformations can be realized by diffeomorphisms,
    we may now assume without loss of generality $[C_i]=E_i$ for $i=1,2$.

    Taking our divisor $Z$ into account, the effect of blowing down both $C_{i}$ can be described as
    performing symplectic fiber sums \cite[Theorem 1.4]{Go95}
    of $(W, Z)$ with $(\CP^{2}, \ell)$ along
    $C_{i}$ and $\ell_{i}$, where $\ell$ is a line intersecting the line $\ell_{i}$ transversally,
    for $i=1$ and $2$.
    The result is $(W', Z')$ where $Z'$ is a symplectic $(-2)$-sphere in a rational
    symplectic manifold $W'$ and under $\iota: H_2(W')\rightarrow H_2(W)$
    the canonical inclusion we have $\iota[Z']=[Z]-E_1-E_2$.

    It follows from \cite[Proposition 4.10]{LW12} that $[Z'] \in H_2(W')$ is Cremona equivalent to the class
    $E_3-E_4$.  Since Cremona transforms are given by reflections about homology classes,
    the inclusion $\i$ translates the Cremona equivalent of $[Z']$ and $E_{3} - E_{4}$ in 
    $H_{2}(W')$
    to a series of Cremona transforms of $H_{2}(W)$ that fix $E_{1}$ and $E_{2}$
    while sending $\iota[Z']$ to the class $E_3-E_4-E_1-E_2$.  This class
    is equivalent
    to $-H+2E_3-E_4$ by reflection with respect to $H-E_1-E_2-E_3$,
    and $-H+2E_3-E_4$ is equivalent to
    the form in the assertion.
\end{proof}

\begin{cor}\label{c:EnoughE}
    If $(W, \w)$ is a symplectic rational manifold and
    $Z$ is an embedded symplectic
    $(-4)$-sphere, then in the complement of $Z$
    there are $n = b_{2}^{-}(W)-1$ disjoint exceptional symplectic spheres
    $\{D_i\}_{i=1}^{n}$.  Moreover, given any set of orthogonal exceptional classes
    pairing trivially with $[Z]$, there are disjoint exceptional spheres in $W\backslash Z$ representing
    these classes.
 \end{cor}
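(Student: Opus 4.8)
The plan is to derive both statements from the ``moreover'' clause --- that a family $e_{1},\dots,e_{m}$ of pairwise orthogonal exceptional classes with $e_{i}\cdot[Z]=0$ is represented by disjoint exceptional spheres inside $W\backslash Z$ --- which I would prove by a positivity-of-intersections argument carried out relative to $Z$. First I would dispose of the case $W=S^{2}\times S^{2}$: here $b_{2}^{-}(W)=1$, so $n=0$, and $S^{2}\times S^{2}$ carries no exceptional classes (its canonical class pairs evenly with $H_{2}$), so both assertions are vacuous. Hence I assume $W=\CP^{2}\#k\overline{\CP}^{2}$, so that $k\geq 2$ by Proposition~\ref{p:C-4C} and $n=b_{2}^{-}(W)-1=k-1$.

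For the first assertion I would exhibit an explicit witness family and then invoke the moreover clause. Applying a diffeomorphism realising a Cremona equivalence --- which preserves symplectic $(-4)$-spheres and configurations of symplectic exceptional spheres --- Proposition~\ref{p:C-4C} lets me assume $[Z]=-H+2E_{1}-E_{2}$. Then $H-E_{1}-E_{2},E_{3},\dots,E_{k}$ is a family of $k-1=n$ pairwise orthogonal exceptional classes (each has square $-1$ and pairs to $-1$ with $K_{0}=-3H+\sum E_{i}$), and a direct check shows each pairs trivially with $-H+2E_{1}-E_{2}$; the moreover clause then produces $n$ disjoint exceptional spheres in $W\backslash Z$. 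Incidentally this family is maximal: since $[Z]^{2}=-4\neq 0$, any such $e_{1},\dots,e_{m}$ together with $[Z]$ are pairwise orthogonal of negative square, hence span an $(m+1)$-dimensional negative-definite subspace, so $m+1\leq b_{2}^{-}(W)=k$.

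To prove the moreover clause I would fix an $\omega$-tame almost complex structure $J$ making $Z$ a $J$-holomorphic curve, chosen generic among such. Each $e_{i}$ is an exceptional class, so its Seiberg--Witten degree is $d(e_{i})=c_{1}(e_{i})+e_{i}^{2}=1+(-1)=0\geq 0$, and $e_{i}\cdot[Z]=0\geq 0$; this is precisely the configuration handled by Biran's relative inflation lemma and McDuff's curve constructions \cite{Bi99,Mc12}, which provide, for generic such $J$, an embedded $J$-holomorphic sphere $D_{i}$ in the class $e_{i}$. Now $Z,D_{1},\dots,D_{m}$ are mutually distinct irreducible $J$-holomorphic curves, so positivity of intersections together with $e_{i}\cdot e_{j}=0$ for $i\neq j$ and $e_{i}\cdot[Z]=0$ forces $D_{i}\cap D_{j}=\emptyset$ and $D_{i}\cap Z=\emptyset$; thus $\{D_{i}\}$ is the required configuration and the corollary follows.

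The hard part will be constructing the spheres $D_{i}$ while keeping $Z$ $J$-holomorphic. Since $d(Z)=c_{1}(Z)+Z^{2}=-2+(-4)=-6<0$, the almost complex structures for which $Z$ is holomorphic are not generic, so one cannot simply quote the standard fact that an exceptional class is represented by an embedded $J$-sphere for regular $J$, and must instead argue relative to $Z$, as in the extension of McDuff's connectedness argument recalled just before Proposition~\ref{p:BPRS}. What makes this go through is exactly $d(e_{i})\geq 0$ together with $e_{i}\cdot[Z]\geq 0$: for generic $J$ with $Z$ holomorphic a $J$-holomorphic cycle in class $e_{i}$ cannot contain $Z$ as a component (a dimension/area count), and the intersection numbers of its remaining components with $[Z]$ are non-negative and sum to $0$, hence vanish, so those components are disjoint from $Z$; a perturbation of $J$ supported away from $Z$ then yields the embedded sphere $D_{i}$. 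I would cite McDuff \cite{Mc12} for this technical input, exactly as in the proof of Proposition~\ref{p:BPRS}.
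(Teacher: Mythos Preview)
Your proposal is correct and follows essentially the same approach as the paper: normalize $[Z]$ to $-H+2E_{1}-E_{2}$ via Proposition~\ref{p:C-4C}, exhibit the explicit family $H-E_{1}-E_{2},E_{3},\dots,E_{k}$, and for the moreover clause invoke McDuff's result (the paper cites \cite[Proposition~1.2.5]{Mc12} specifically) to obtain embedded $J$-holomorphic representatives for a $J$ making $Z$ holomorphic, with disjointness coming from positivity of intersections. Your exposition of why the nongeneric case $d(Z)<0$ forces one to work relative to $Z$ is more detailed than the paper's, but the underlying argument is the same.
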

\begin{proof}
    There is nothing to prove if $W = S^{2} \times S^{2}$ or $\CP^2\#\overline{\CP}^{2}$, so
    by Proposition~\ref{p:C-4C} we can assume $W = \CP^{2} \# (n+1)\overline{\CP}^{2}$
    with $n \geq 1$ and $[Z] = -H + 2E_{1} - E_{2}$.
    In this case,
    $$
        \mbox{$E_{1}' = H-E_{1}-E_{2}$ \quad and \quad$E_{i}'= E_{i+1}$ for $i=2,\dots,n$.}
    $$
    is a set of $n$ orthogonal exceptional classes that pair trivially with $[Z]$.
    Now it suffices to prove the second part, which follows from
    McDuff's result \cite[Proposition 1.2.5]{Mc12} that
      asserts we can find a compatible  almost complex structure $J$ on $(W, \w)$ for which
      $Z$ is a complex submanifold
    and the classes $E_{i}'$ are represented by a $J$-holomorphic embedded spheres $D_{i}$.
    By positivity of intersections is follows that the $D_{i}$ are disjoint from each other and $Z$.
\end{proof}

\begin{defn}\label{d:push off}
If $(W^{4}, \w)$ is a symplectic rational manifold and $Z \subset W$ is a symplectic $(-4)$-sphere, then
a set of $n = b_{2}^{-}(W) - 1$ disjoint exceptional spheres $\{D_i\}_{i=1}^{n}$ in $W \backslash Z$
is called a \emph{push off set of $Z$} and the set $\{[D_i]\}_{i=1}^{n}$ of exceptional classes is called a
\emph{push off system of $Z$}.
\end{defn}

\section{Lagrangian $\RP^2$'s in a rational manifolds}\label{s:Lag RP2}

In this section $(M^{4}, \w)$ will always be a symplectic $\CP^{2}\# k \overline{\CP}^{2}$
where the canonical class is $K_{0} = -3H + E_{1} + \cdots + E_{k}$
and $L \subset M$ will be a Lagrangian $\RP^{2}$. Note that
$S^2\times S^2$ does not contain Lagrangian $\RP^2$ since it does not
have a $\Z_2$-homology class with non-trivial self-pairing, so we
need not put it into consideration.
We will let $(W, Z) = (W^{+}_{\cN}, Z_{\cN})$ be the result of performing the symplectic cutting procedure from Section~\ref{s:SCBP} on $(M, L)$.  Note by construction that $Z \subset W$ is a symplectic
$(-4)$ sphere and $W$ is a symplectic rational manifold $\CP^2\#(k+1)\overline{\CP}^2$ by Lemma~\ref{l:b2+}.

\subsection{Push off and orthogonal systems}

Translating our knowledge about symplectic $(-4)$ spheres leads to the following result.

\begin{prop}\label{p:DfRP}
     If $(M^{4}, \w)$ is a symplectic rational manifold with $L \subset M$ a Lagrangian $\RP^{2}$,
     then there is a set of $k = b_{2}^{-}(M)$ disjoint exceptional spheres
     $\{D_i\}_{i=1}^{k}$ in $M \backslash L$.
\end{prop}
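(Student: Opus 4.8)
The plan is to transport the picture from the cut manifold $(W,Z)$ back down to $(M,L)$. First I would recall the setup from the start of Section~\ref{s:Lag RP2}: performing the symplectic cut on $(M,L)$ produces the rational symplectic $4$-manifold $W = \CP^{2}\#(k+1)\overline{\CP}^{2}$ together with the symplectic $(-4)$-sphere $Z\subset W$, and by construction $W\backslash Z$ is symplectomorphic to $M\backslash\cN$ for a Weinstein neighborhood $\cN$ of $L$. Since $b_{2}^{-}(W) = k+1$, Corollary~\ref{c:EnoughE} gives a push off set of $Z$, that is, $n = b_{2}^{-}(W)-1 = k$ disjoint exceptional symplectic spheres $\{D_i\}_{i=1}^{k}$ lying in $W\backslash Z$. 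Transporting these through the symplectomorphism $W\backslash Z \cong M\backslash\cN$ yields $k$ disjoint exceptional symplectic spheres in $M\backslash\cN$, and since $\cN$ is a neighborhood of $L$ this places them in $M\backslash L$.

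The one point that needs a little care is that ``exceptional sphere'' is a notion about homology classes in the ambient manifold, so I must check that the $D_i$, viewed now as spheres in $M$, still represent $K_0$-exceptional classes of $M$ rather than merely of $W$. This is where the cohomology computation of Lemma~\ref{l:b2+}, specifically the relation \eqref{e:Q} between intersection forms, enters: under $Q_{W} = (-1)\oplus Q_{M}$ with $\mathrm{PD}(Z)$ identified with $2h$, the orthogonal complement of $[Z]$ in $H_2(W)$ (rationally, and after the relevant integral identification) matches $H_2(M)$ together with the canonical classes. Classes represented by spheres disjoint from $Z$ lie in this complement and hence descend to classes in $H_2(M)$; adjunction is preserved, so the images are spherical classes of square $-1$ pairing to $-1$ with $K_0(M)$, i.e.\ $K_0$-exceptional. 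Thus the transported spheres are genuine exceptional symplectic spheres in $M\backslash L$.

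I expect the main obstacle to be purely bookkeeping: making the identification of $H_2$-lattices under the rational blow-down/symplectic cut precise enough that ``disjoint from $Z$'' translates cleanly into ``descends to $H_2(M)$ as an exceptional class,'' keeping track of the $2h$ identification from \eqref{e:Q}. No genuinely new geometric input is required beyond Corollary~\ref{c:EnoughE} and Lemma~\ref{l:b2+}; once the homological dictionary is in place the proposition follows in a line. A clean way to write it is: apply Corollary~\ref{c:EnoughE} to $(W,Z)$, push the resulting $k$ disjoint exceptional spheres through the symplectomorphism $W\backslash Z\cong M\backslash\cN\hookrightarrow M\backslash L$, and invoke \eqref{e:Q} to see that the images are exceptional in $M$.
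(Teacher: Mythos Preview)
Your approach is essentially identical to the paper's: apply Corollary~\ref{c:EnoughE} to $(W,Z)$ and push the resulting $k$ disjoint exceptional spheres through the symplectomorphism $W\backslash Z\cong M\backslash\cN\subset M\backslash L$. The homological bookkeeping you anticipate is unnecessary, since being an exceptional sphere---a symplectically embedded sphere of self-intersection $-1$---is determined by the normal bundle of the embedding and is therefore preserved by any open symplectic embedding; accordingly the paper's proof is a single sentence.
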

\begin{proof}
    By construction $W\backslash Z$ is symplectomorphic to a subset of $M \backslash L$
    and by Corollary~\ref{c:EnoughE} there are $k = b_{2}^{-}(W) - 1 = b_{2}^{-}(M)$
    disjoint exceptional spheres $\{D_i\}_{i=1}^{k}$ in $W \backslash Z$.
\end{proof}

\noindent
In light of Proposition \ref{p:DfRP}, we introduce the following analogue of Definition \ref{d:push off}.

\begin{defn}\label{d:POLAG}
    If $(M^{4}, \w)$ is a symplectic rational manifold and $L \subset M$ is a Lagrangian $\RP^{2}$,
    then a set $k = b_{2}^{-}(M)$ disjoint exceptional spheres $\{D_i\}_{i=1}^{k}$ in $M \setminus L$
    is called a \textit{push off set} of $L$ and the set
    $\{[D_i]\}_{i=1}^{k}$ of exceptional classes is called a \textit{push off system} of $L$.
\end{defn}

The following proposition shows the importance of the notion of a push off system in proving Theorem~\ref{t:TTRP2}.

\begin{prop}\label{p:isotopy criterion}
Suppose two Lagrangian $\RP^2$'s in a symplectic rational manifold have a common push off system,
then they are both (1) smoothly isotopic and (2) Torelli symplectomorphic.
\end{prop}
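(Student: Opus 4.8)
The plan is to use the common push off system to blow $M$ down to $\CP^2$, where Lagrangian $\RP^2$'s are unique, and then blow back up, using Theorem~\ref{t:BPRL-sphere} to connect the two resulting blow-up structures.

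First I would write $M=\CP^2\#k\overline{\CP}^2$ and fix the common push off system $\{[D_i]\}_{i=1}^{k}$, realized by a push off set $\{D_i^{0}\}_{i=1}^{k}$ of disjoint symplectic exceptional spheres in $M\setminus L_0$ and by a push off set $\{D_i^{1}\}_{i=1}^{k}$ in $M\setminus L_1$, with $[D_i^{0}]=[D_i^{1}]=[D_i]$. Since the classes $[D_i]$ are orthogonal and the spheres $D_i^{j}$ are pairwise disjoint and disjoint from $L_j$, the symplectic blow-down recalled in Section~\ref{s:CBP} lets me blow down all of the $D_i^{j}$ at once; the outcome is a rational symplectic $4$-manifold with $b_2=1$, hence $(\CP^2,c_j\,\w_{FS})$, together with a symplectic ball packing $\iota_j\colon\coprod_i B^4(\lambda_i^{j})\to\CP^2$ whose blow-up recovers $(M,\w)$ with the $i$-th exceptional sphere equal to $D_i^{j}$. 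As the $D_i^{j}$ miss $L_j$ we have $\iota_j\in E_{\bar\lambda_j}(\CP^2\setminus L_j)$, where $L_j$ is now viewed as a Lagrangian $\RP^2$ in $\CP^2$. Crucially, blowing down is a cohomological operation determined by $[\w]$ and the classes $[D_i]$, so since these are shared by $j=0,1$, the canonical identification $H_2(\CP^2)=\{x\in H_2(M):x\cdot[D_i]=0\text{ for all }i\}$ makes the two outcomes the same $(\CP^2,c\,\w_{FS})$ and gives $\bar\lambda_0=\bar\lambda_1=\bar\lambda$ with $\lambda_i=\w([D_i])$.

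Next, granting the base case that any two Lagrangian $\RP^2$'s in $(\CP^2,c\,\w_{FS})$ are Hamiltonian isotopic --- this is the $k=0$ instance of the proposition and must be established separately --- I would choose $\phi\in\Symp_0(\CP^2,c\,\w_{FS})$ with $\phi(L_0)=L_1$ and replace $\iota_0$ by $\phi\circ\iota_0$, so that $\phi\circ\iota_0$ and $\iota_1$ both lie in $E_{\bar\lambda}(\CP^2\setminus L_1)$. By Theorem~\ref{t:BPRL-sphere} applied to the rational manifold $(\CP^2,c\,\w_{FS})$ and the Lagrangian $\RP^2$ $L_1$, this packing space is connected; by isotopy extension a connecting path of packings disjoint from $L_1$ is covered by an isotopy in $\Symp^c_0(\CP^2\setminus L_1)$, producing $G\in\Symp^c_0(\CP^2\setminus L_1)$ with $G\circ\phi\circ\iota_0=\iota_1$. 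Put $h:=G\circ\phi$: it is a symplectomorphism of $(\CP^2,c\,\w_{FS})$, isotopic to the identity through symplectomorphisms, with $h(L_0)=L_1$ and $h\circ\iota_0=\iota_1$. Blowing up $h$ along $\iota_0$ (image $\iota_1$) yields a symplectomorphism $F$ of $(M,\w)$ with $F(L_0)=L_1$ and $F(D_i^{0})=D_i^{1}$; since $h\simeq\id$ and $[D_i^{0}]=[D_i^{1}]$, $F$ acts as the identity on $H_2(M;\Z)$, so $F$ is a Torelli symplectomorphism, which is (2). For (1), the identity-isotopy $h_t$ of $h$ gives a path $h_t\circ\iota_0$ of ball packings from $\iota_0$ to $\iota_1$ in $\CP^2$, and blowing up along this family exhibits $F$ as smoothly isotopic to $\id_M$, so $L_0=F^{-1}(L_1)$ is smoothly isotopic to $L_1$.

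The main obstacle is the $\CP^2$ base case itself: the argument above is really a reduction, so one still owes a proof that Lagrangian $\RP^2$'s in $\CP^2$ are unique up to Hamiltonian isotopy --- for instance directly, or by applying the symplectic cut of Section~\ref{s:SCBP} and invoking the known uniqueness of symplectic $(-4)$-spheres in a fixed homology class of the resulting minimal rational surface, then translating back. A secondary technical point, needed for (1), is to verify carefully that blowing up the identity-isotopy $h_t$, which carries the family of ball packings $h_t\circ\iota_0$ through $E_{\bar\lambda}(\CP^2)$, genuinely produces a smooth isotopy from $\id_M$ to $F$ in $\Diff(M)$ --- equivalently, that the two blow-up identifications of $M$ coming from $\{D_i^{0}\}$ and $\{D_i^{1}\}$ differ by a diffeomorphism isotopic to the identity --- so that $L_0$ and $L_1$ are carried onto one another by an ambient smooth isotopy.
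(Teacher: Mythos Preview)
Your proposal is correct and follows the paper's strategy: reduce to $\CP^2$ by blowing down the common push off system, invoke Hamiltonian uniqueness of Lagrangian $\RP^2$'s there (the paper cites \cite{Hi12, LW12} for exactly this base case, so it is not an obstacle), apply Theorem~\ref{t:BPRL-sphere} to match the resulting ball packings, and blow back up. For part~(2) your argument is precisely the template the paper points to, namely the proof of Corollary~\ref{c:TTS2} in Appendix~\ref{s:A2}.

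The one place the paper differs is in part~(1), and the difference is worth knowing because it dissolves your ``secondary technical point''. Rather than blowing up a one-parameter family of packings, the paper first uses Corollary~\ref{c:EnoughE} together with \cite[Proposition~3.4]{LW12} to arrange that $L_0$ and $L_1$ share a common push off \emph{set} (not just system), i.e.\ the very same exceptional spheres. Blowing those down once lands both Lagrangians in a single $(\CP^2,\w)$ with a single ball packing $\iota$; the Hamiltonian isotopy between them in $\CP^2$ can then be perturbed in the smooth category to avoid the centers of the balls, since smoothly avoiding a ball is the same as avoiding a point, and so lifts directly to a smooth isotopy of $M$. This bypasses the need to check that blowing up the family $h_t\circ\iota_0$ produces an honest ambient isotopy of $M$.
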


\bpf
    For the first claim,
    Corollary~\ref{c:EnoughE} and \cite[Proposition 3.4]{LW12} ensures that up to symplectic isotopy the Lagrangian
    $L_{i}$ share common a push off set.  By blowing down the push-off set we have
    Lagrangian $\RP^{2}$'s in $\CP^{2}$, which are unique up to Hamiltonian isotopy \cite{Hi12, LW12}.
    This can be lifted to a smooth isotopy of $M$ since avoiding a blow-up region along an isotopy is equivalent to avoiding
    a point in the smooth category.  See also the argument in the proof of \cite[Theorem 1.6]{LW12}.
        The proof of the second claim is identical to the proof of Corollary \ref{c:TTS2} in Appendix~\ref{s:A2}.
\epf

The rest of the section lists some homological restrictions on Lagrangian $\RP^2$ classes, which will prove useful.  In particular, the following definition, albeit purely topological, will provide more accurate homological information regarding a Lagrangian $\RP^2$ class.

\begin{defn}\label{d:mod2 push off}
Let $(M, \w)$ be a symplectic $\CP^{2}\#k\overline{\CP}^{2}$ and $A\in H_2(M;\Z_2)$,
then a \emph{$\Z_2$-orthogonal system} for $A$ is a set
$\{F_{i}\}_{i=1}^{k}$ of pairwise orthogonal exceptional classes in $H_{2}(M;\Z)$ such that $A\cdot F_i=0$
in $\Z_{2}$-homology for all $i$.  If $A$ admits a $\Z_2$-orthogonal system, then $A$ is called a
\emph{$K_{0}$-Lagrangian $\RP^{2}$ class}.
\end{defn}

\begin{remark}\label{r:POvO}
    Note that a $\Z_{2}$-orthogonal system consists of only homological conditions for the
    exceptional classes, while a push off system (Definition \ref{d:POLAG}) requires exceptional representatives disjoint from $L$.
    We have that
    $$
        \{\text{push of systems of }L\} \subset \{\Z_2\text{-orthogonal systems of }[L]\}
    $$
    as collections of subsets of $H_{2}(M;\Z)$.
\end{remark}

\blem\label  {l:RP2 classes} Let $(M, \w)$ be a symplectic $\CP^{2}\# k \overline{\CP}^{2}$.
\begin{enumerate}[(1)]
\item If $L \subset M$ is a Lagrangian $\RP^{2}$, then $[L] \in H_{2}(M; \Z_{2})$ is a $K_0$-Lagrangian
$\RP^2$ class.

\item Each $K_0$-Lagrangian $\RP^2$ class is Cremona equivalent to the $\Z_2$-reduction
$H$.

\item The $\Z_{2}$-reduction $H \in H_{2}(M; \Z_{2})$ is the unique $K_0$-Lagrangian
$\RP^2$ class when $k \leq 2$.

\item Every $K_0$-Lagrangian $\RP^2$ class
is represented by a smoothly embedded $\RP^2$, which is Lagrangian for some symplectic form on $M$.
\end{enumerate}
\elem

\begin{proof}
Part (1) follows directly from Proposition \ref{p:DfRP} and definitions.

To prove (2), we need to show the following statement: given $k$ orthogonal exceptional classes,
there is a Cremona transform sending this set to $\{E_1,\dots, E_k\}$.  This is proved in
 \cite[Corollary 4.5]{LW12} for a set of at most $k-2$ exceptional classes (and does not
 hold in general for a set of $k-1$ exceptional classes, e.g.\ when $k=2$ and the set is $\{H-E_1-E_2\}$).

Apply the proved case for $k-2$ exceptional classes to $\{E_3,\cdots, E_k\}$.  This reduces the problem to the case of
$\CP^2\#2\overline{\CP}^2$.  But the only orthogonal set of exceptional classes containing 2 elements is $\{E_1, E_2\}$.
This means the induction necessarily terminates in this case and the resulting Cremona transform sends the
 set of exceptional classes to $\{E_i\}_{i=1}^{b^-}$ as desired.
 It is then clear that the only non-trivial $\Z_2$-class orthogonal to this set is the $\Z_2$-reduction of $H$.

For part (3), when $k\leq 2$ there is a unique set of $k$ orthogonal exceptional classes, so the only
$K_0$-Lagrangian $\RP^2$ class is the reduction of $H$.

It is easy to make the proper transform of the standard $\RP^2\subset \CP^2$ Lagrangian, just by packing sufficiently small balls in the complement.  Therefore part (4) follows from (2) since Cremona transforms are homological actions of smooth Dehn twists.
\end{proof}

Return now to the case where $L \subset M = \CP^{2}\# k \overline{\CP}^{2}$ is a Lagrangian $\RP^{2}$
and $(W, Z)$ is the result of performing the symplectic cutting procedure.
The next lemma describes the relationship between the push off systems of $L \subset M$ and
$Z \subset W$.

\blem\label{l:criterion}
The isomorphism $H_{2}(W \backslash Z; \Z) \to H_{2}(M \backslash L; \Z)$ induces a bijection:
$$\tau:\{\text{push off systems of }Z\}\rightarrow\{\text{push off systems of }L\}$$
\elem
\bpf
The map $\tau$ is defined as follows, a push off set of $Z$ survives the rational blow-down along $Z$
and hence defines a push off set of $L$.  The homology isomorphism ensures $\tau$ is well-defined.
Since the map $H_{2}(M\backslash L; \Z) \to
H_{2}(M; \Z)$ is injective by the long exact sequence for the pair $(M, M\backslash L)$,
and there is an isomorphism $H_{2}(W \backslash Z; \Z) \cong H_{2}(M \backslash L; \Z)$,
it follows that $\tau$ is injective.

For surjectivity we argue as follows. Given a push off system of $L$, perform a symplectic cut on a sufficiently small
 Weinstein neighborhood $\cN'$ of $L$ to get a push off system
for $Z'$ in an ambient symplectic manifold $W'$.  Since $(W,Z)$ and $(W', Z')$ differ by a symplectic deformation near the divisors and exceptional classes are invariant under symplectic deformation,
Corollary~\ref{c:EnoughE} builds a push off set for $Z$ that $\tau$ maps to the original push off system of
$L$.
\epf

\subsubsection{Proof of Theorem~\ref{t:TTRP2}}

\begin{lem}\label{l:counting}   Suppose $L$ is a Lagrangian $\RP^2$ in $M = \CP^2\#k\overline{\CP}^2$
for $k\leq 8$, then each $\Z_2$-orthogonal system of $[L]$ is a push off system of $L$.
\end{lem}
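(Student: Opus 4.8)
The plan is to reduce the statement to the known classification of $K_0$-Lagrangian $\RP^2$ classes together with a cardinality count, exploiting the fact that for $k \le 8$ the $\Z_2$-orthogonal systems of $[L]$ are so constrained that they must all arise as push off systems. By Remark~\ref{r:POvO} we already know that every push off system of $L$ is a $\Z_2$-orthogonal system of $[L]$, so it suffices to show the reverse containment, i.e.\ that an arbitrary $\Z_2$-orthogonal system $\{F_i\}_{i=1}^k$ of $[L]$ is realized by disjoint exceptional spheres in $M \setminus L$. First I would pass through the symplectic cut: by Lemma~\ref{l:criterion} the push off systems of $L$ correspond bijectively to the push off systems of $Z \subset W$, where $(W,Z)$ is the result of symplectic cutting, and by Corollary~\ref{c:EnoughE} (applied to the symplectic $(-4)$-sphere $Z$) every set of $k = b_2^-(W)-1$ orthogonal exceptional classes pairing trivially with $[Z]$ is a push off system of $Z$. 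So the real content is matching up the $\Z_2$-orthogonality condition against $[L]$ with the integral orthogonality-and-trivial-pairing condition against $[Z]$.

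The key step is therefore to translate the homological constraint. Using Lemma~\ref{l:RP2 classes}(1), $[L]$ is a $K_0$-Lagrangian $\RP^2$ class, and by part~(2) of that lemma it is Cremona equivalent to the $\Z_2$-reduction of $H$; since Cremona transforms are realized by diffeomorphisms of $M$, we may normalize so that $[L]$ is the $\Z_2$-reduction of $H$ and hence $[Z] = 2h$ under the identification $Q_{W} = (-1) \oplus Q_M$ from \eqref{e:Q}. Under this normalization a set of exceptional classes $\{F_i\}$ in $H_2(M;\Z)$ pairs trivially mod $2$ with $H$ exactly when the corresponding classes in $H_2(W;\Z)$ are orthogonal (over $\Z$) to $[Z] = 2h$: indeed pairing with $2h$ is an even multiple of pairing with $h$, so $F_i \cdot 2h = 0$ is automatic — wait, this needs care, and the correct statement is that the orthogonal complement of $[Z]$ inside $H_2(W;\Z)$, once a representative in the standard basis is chosen, surjects onto the classes mod $2$-orthogonal to $H$. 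I would spell this out by choosing the explicit standard form of $[Z]$ (namely $-H + 2E_1 - E_2$ after Cremona moves, as in Proposition~\ref{p:C-4C}) and checking that a $\Z_2$-orthogonal system of $[L]$ lifts to a system of orthogonal exceptional classes of the form handled by Corollary~\ref{c:EnoughE}.

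The main obstacle is this last lifting/normalization step: one must verify that \emph{every} $\Z_2$-orthogonal system of $[L]$ — not just one chosen representative — lifts compatibly, and that the Cremona normalization of $[L]$ can be performed simultaneously with putting the given system $\{F_i\}$ into a standard orthogonal form without losing the trivial pairing with $[Z]$. The hypothesis $k \le 8$ is what makes this tractable: for $b_2^- \le 8$ the set of orthogonal exceptional systems is small enough (controlled via \cite[Corollary 4.5]{LW12}, as already used in Lemma~\ref{l:RP2 classes}) that one can enumerate the possibilities, or more cleanly, one shows the cardinality of $\{\Z_2\text{-orthogonal systems of }[L]\}$ equals that of $\{\text{push off systems of }L\}$ using the bijection $\tau$ of Lemma~\ref{l:criterion} and the count of orthogonal exceptional systems of $Z$, forcing the inclusion of Remark~\ref{r:POvO} to be an equality. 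I expect the cleanest write-up routes through $(W,Z)$: identify $\Z_2$-orthogonal systems of $[L]$ with orthogonal exceptional systems of $Z$ directly via the homology isomorphism and \eqref{e:Q}, then invoke Corollary~\ref{c:EnoughE} to realize each one geometrically, and finally push everything forward through $\tau$.
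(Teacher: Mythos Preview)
Your proposal contains the right approach and essentially matches the paper's: reduce to a cardinality comparison using the inclusion from Remark~\ref{r:POvO}, normalize $[L]$ to the $\Z_2$-reduction of $H$ via Lemma~\ref{l:RP2 classes}, use the bijection $\tau$ of Lemma~\ref{l:criterion} together with Corollary~\ref{c:EnoughE} to reduce the push off side to counting sets of $k$ orthogonal exceptional classes in $H_2(W;\Z)$ pairing trivially with $[Z]=-H+2E_1-E_2$, and then (since $k\le 8$ forces finitely many exceptional classes) verify that the two counts agree. The paper carries out this last step by explicit enumeration in Appendix~B, case by case for $1\le k\le 8$; your write-up should commit to this enumeration rather than leave it implicit.

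Where your proposal goes wrong is in the final sentence, where you suggest the ``cleanest write-up'' would \emph{directly identify} $\Z_2$-orthogonal systems of $[L]$ with orthogonal exceptional systems of $Z$ via the homology isomorphism and \eqref{e:Q}. There is no such natural bijection: the monomorphism $\iota:\cL_L\hookrightarrow H_2(W;\Z)$ of Lemma~\ref{l:HERB} need not carry exceptional classes to exceptional classes, and Lemma~\ref{l:HERB}(3) says precisely that this failure is the obstruction to realizing a given exceptional class by a sphere disjoint from $L$. Proposition~\ref{p:KRP2}(1) exhibits an explicit exceptional class $E$ with $\iota(E)$ of minimal genus $1$ once $k\ge 9$, which is exactly why Lemma~\ref{l:counting} fails there. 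So the equality of the two sets for $k\le 8$ is not a formal consequence of the homological setup; it genuinely requires the numerical coincidence verified by the explicit count, and you should drop the suggestion of a direct homological identification.
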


\begin{proof}
    Note there is nothing to prove if $k = 0$, and so if we let
    $(W, Z)$ be the result of doing the symplectic cut to $(M, L)$, by Lemma~\ref{l:b2+}
    we may assume that $W = \CP^{2}\#(k+1)\overline{\CP}^{2}$ where $1\leq k \leq 8$.

    When $k \leq 8$, there are only finitely many exceptional classes
    and hence a finite number of push off systems and $\Z_2$-orthogonal systems.
    Hence by the inclusion in Remark~\ref{r:POvO} it suffices to perform a count
    of each of these sets.  As we will now explain this reduces to a count
    of certain homology classes, and this count is performed in Appendix B.

    By Lemma~\ref{l:RP2 classes} we can assume that $[L] = H \in H_{2}(M; \Z_{2})$,
    so we only need to count $\Z_{2}$-orthogonal systems for $H \in  H_{2}(M; \Z_{2})$.
    This is equivalent to counting sets of $k$ orthogonal
    exceptional classes in $H_{2}(M; \Z)$ that are $\Z_{2}$-orthogonal to $H$.

    By Lemma~\ref{l:criterion} push off systems of $L$ in $M$ correspond to
    push off systems of $Z$ in $W$ and by Proposition~\ref{p:C-4C} we
    can assume that $[Z] = S = -H + 2E_{1} - E_{2}$. So by the second part of
    Corollary \ref{c:EnoughE}
    it suffices to count sets of $k$ orthogonal exceptional classes in $H_{2}(W;\Z)$ that
    pair trivially with $S$ in $H_{2}(W; \Z)$.
\end{proof}

We can now prove Theorem~\ref{t:TTRP2}.

\begin{proof}[Proof of Theorem \ref{t:TTRP2}]
    If $L_{1}$ and $L_{2}$ are in the same $\Z_{2}$-homology class, then they
    have a common $\Z_{2}$-orthogonal system and hence by Lemma~\ref{l:counting}
    they share a common push off system since $k \leq 8$.
    Therefore by Proposition \ref{p:isotopy criterion} they are smoothly isotopic
    and Torelli symplectomorphic.
\end{proof}


\subsection{Homology correspondence}

We now attempt to understand the relation between $\Z_2$-orthogonal classes and push-off systems of
Lagrangian $\RP^2$ in more detail.  This leads to the failure of Lemma \ref{l:counting} when $k\geq9$
thus a construction for twisted Lagrangian $\RP^2$ as in Proposition \ref{p:KRP2}.

\subsubsection{Associated basis}

Here we will explicitly describe the homological effect of rational blow-down in
rational manifolds for $M = W_{Z}\#_{Q}\CP^{2}$.

\begin{lem}\label{l:basis}
   \begin{enumerate}[(1)]
    \item There is an orthogonal basis $\{H', E_{0}', E_{1}', \dots, E_{k}'\}$ of $H_{2}(W;\Z)$,
    where $H'$ is a line class and the $E_{i}'$ are exceptional classes.
    Elements of this basis are represented by disjoint symplectic spheres $\{h', e_{0}', \dots, e_{k}'\}$.
    The symplectic spheres can be picked so each of $e_{1}', e_{2}', e_{3}'$ intersect
    $Z$ exactly once and positively, while $h', e_{4}', \dots, e_{k}'$ are disjoint from $Z$.

    \item There is an orthogonal basis $\{H, U_{1}, U_{2}, U_{3}, E_4,\dots, E_{k}\}$ for $H_{2}(M;\Z)$,
    where $H$ is a line class and the rest are exceptional classes.
    Elements of this basis are
    represented by disjoint symplectic spheres $\{h, u_{1}, u_{2}, u_{3}, e_4, \dots, e_{k}\}$.
    Furthermore $[L] = U_{1} + U_{2} + U_{3} \in H_{2}(M;\Z_{2})$.
   \end{enumerate}
\end{lem}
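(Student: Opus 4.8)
The plan is to construct the two bases by pushing the $(-4)$-sphere $Z\subset W$ to a convenient position via Cremona moves and then reading off the rational blow-down.

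\medskip

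\noindent\textbf{Approach.} By Proposition~\ref{p:C-4C}, after a diffeomorphism realizing a Cremona equivalence, we may assume $[Z] = S = -H + 2E_1 - E_2 \in H_2(W;\Z)$, where $W = \CP^2 \# (k+1)\overline{\CP}^2$. First I would produce part (1) directly in this normalized model: set
$$
H' = H - E_1, \quad E_0' = H - E_1 - E_2, \quad E_1' = E_1 - E_3,\quad E_2' = E_1 - E_4, \quad E_3' = E_1 - E_5,
$$
(adjusting indices/number if $k+1$ is small) and $E_i' = E_{i+1}$ for $i \geq 4$. One checks this is an orthogonal basis with $H'$ a line class and the $E_i'$ exceptional, and computes the intersection numbers with $S$: the classes $E_1', E_2', E_3'$ pair to $+1$ with $S$ while $H', E_4', \dots$ pair to $0$. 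Then invoke McDuff's result \cite[Proposition 1.2.5]{Mc12} (as in the proof of Corollary~\ref{c:EnoughE}) to find a compatible $J$ making $Z$ holomorphic and realizing all these classes by embedded $J$-holomorphic spheres; positivity of intersections forces the three $e_i'$ meeting $Z$ to do so once and positively, and the remaining spheres to be disjoint from $Z$ and from each other except where intersection numbers are nonzero. The classes $H', E_0', \dots$ are mutually orthogonal so their representatives are disjoint.

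\medskip

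\noindent\textbf{From (1) to (2).} The rational blow-down $M = W_Z \#_Q \CP^2$ is the symplectic sum of $(W,Z)$ with $(\CP^2, Q)$, the inverse of the symplectic cut. Under this operation the homology of $M$ is computed from $H_2(W\backslash Z)$; concretely, the classes $H', E_4', \dots, E_k'$, being disjoint from $Z$, survive to classes $H, E_4, \dots, E_k$ in $M$, while the three classes $e_1', e_2', e_3'$ meeting $Z$ once get capped off by a disk in $Q$ and descend to exceptional classes $U_1, U_2, U_3$ in $M$. The class $E_0'$ (which is $Q$-related, coming from $2h$ on the $\overline{\CP}^2$ side via \eqref{e:Q}) is killed. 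One verifies using the Mayer--Vietoris / long exact sequences for the symplectic sum that $\{H, U_1, U_2, U_3, E_4, \dots, E_k\}$ is an orthogonal basis of $H_2(M;\Z)$, with the intersection form matching $Q_M$. The symplectic sphere representatives $h, u_1, u_2, u_3, e_4, \dots, e_k$ are obtained by gluing the pieces of $h', e_i'$ in $W\backslash\nu(Z)$ to disks in $\CP^2\backslash\nu(Q)$; disjointness is inherited since the originals met only along $Z$.

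\medskip

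\noindent\textbf{The class of $L$.} Finally, $[L] \in H_2(M;\Z_2)$ must be the unique $\Z_2$-class with $[L]^2 = 1$ that is $\Z_2$-orthogonal to a push off system, by Lemma~\ref{l:RP2 classes} and Remark~\ref{r:POvO}. In the new basis, $\{U_1, U_2, U_3, E_4, \dots, E_k\}$ is a push off system of $L$ (it corresponds under $\tau$ of Lemma~\ref{l:criterion} to the push off system $E_1', E_2', E_3', E_4', \dots$ of $Z$ — noting $E_i'\cdot S = 1$ for $i=1,2,3$ but these still pair trivially $\bmod\,2$ is \emph{not} automatic, so I would instead take the push off classes of $Z$ to be $H'-E_1'-E_2'$-type combinations giving $\Z$-orthogonality; this is exactly the construction in Corollary~\ref{c:EnoughE}). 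Then $[L]$ is determined by being $\Z_2$-orthogonal to this system and having square $1$; a short linear-algebra check shows $[L] = U_1 + U_2 + U_3 \bmod 2$ (each $U_i$ contributes $1$ to the square, total $3 \equiv 1$, and $(U_1+U_2+U_3)\cdot U_j = U_j^2 = -1 \equiv 1 \neq 0$ — so I must instead pair against the correct push off classes, which are orthogonal \emph{combinations} of the $U_i$ and $E_j$; I would identify these explicitly and confirm $U_1+U_2+U_3$ is $\Z_2$-orthogonal to all of them).

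\medskip

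\noindent\textbf{Main obstacle.} The subtle point, and the one I expect to require care, is the bookkeeping at the symplectic-sum stage: tracking exactly which classes in $H_2(W)$ survive, which get identified, and which die, and verifying that the surviving set is genuinely a basis with the predicted intersection form — and then correctly identifying $[L]$ modulo $2$ against the push off system rather than against the raw basis classes. Getting the mod-$2$ orthogonality relations straight (so that $[L] = U_1+U_2+U_3$ really is forced) is where a naive computation goes wrong, so I would set up the push off system of $Z$ as honest $\Z$-orthogonal exceptional classes (à la Corollary~\ref{c:EnoughE}, e.g.\ $H'-E_1'-E_2'$ and the $E_i'$ for $i\geq 4$) and transport those through $\tau$.
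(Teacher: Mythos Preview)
Your overall shape for part (1) is the same as the paper's (normalize $[Z]$ via Cremona, then invoke McDuff's results on nongeneric $J$-curves to realize the basis classes), but your explicit basis is wrong: $(H-E_1)^2=0$ so $H'$ is not a line class, and $(E_1-E_3)^2=-2$ so your $E_i'$ are not exceptional. The paper instead passes to the Cremona-equivalent form $[Z]=E_0'-E_1'-E_2'-E_3'$ in a standard basis and cites \cite[Proposition~1.2.7]{Mc12} directly. This is easily repaired.

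The real gap is in your construction of $U_1,U_2,U_3$. You say each $e_i'$ (meeting $Z$ once) ``gets capped off by a disk in $Q$'', but this is topologically impossible: $\CP^2\setminus Q$ deformation retracts to $\RP^2$, so $\pi_1(\CP^2\setminus Q)=\Z/2$, and a single meridian of $Q$ represents the nontrivial element. Thus a curve hitting $Z$ once cannot be closed up by a disk on the $\CP^2$ side. The paper's construction is essentially different: each $u_j$ is obtained by gluing a \emph{pair} of the $e_i'$'s to a degree-one curve in $\CP^2$ that meets the quadric at the two corresponding points, using the gluing theorems for relative Gromov--Witten invariants \cite{LR01,IP04}. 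Once you build the $u_j$ this way, the computation of $[L]$ is immediate and avoids all of your push-off-system contortions: $h,e_4,\dots,e_k$ are disjoint from $L$ by construction, while each $u_j$ contains a line in $\CP^2$ which meets the standard $\RP^2$ once mod $2$, so $[L]\cdot U_j=1$ and $[L]\cdot H=[L]\cdot E_j=0$, forcing $[L]=U_1+U_2+U_3$.
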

\begin{proof}
    By Proposition~\ref{p:C-4C} we may assume that $[Z] =  E_{0}' - E_{1}' -E_{2}' -E_{3}'$,
    since it is Cremona equivalent to $-H' + 2E_{0}' - E_{1}'$, and from here
    \cite[Proposition 1.2.7]{Mc12}
    builds the appropriate spheres $h'$ and $e_i'$ for $i=1, \dots, k$
    for generic almost complex structures for which $Z$ is holomorphic.
   The symplectic representative of $e_0'$ will not be used later, but one can always attain
   a desired representative by perturbing the almost complex structure from the previous step to a
   generic one.  This proves part (1).

For part (2) since the curves $h', e_{4}', \dots, e_{k}'$ in $W$
are disjoint from $Z$ they can also be viewed as curves in $M= W_{Z}\#_{Q}\CP^{2}$ after
performing the symplectic sum.
So we are
left to build the curves $u_{1}, u_{2}, u_{3}$ in $M$.  To build
$u_{1}$ the gluing method for relative Gromov--Witten invariants
\cite{LR01, IP04} allow us to glue the curves $e_{1}'$ and $e_{2}'$
in $W$, which intersect $Z$ transversally, to a degree one curve
$e_{12}'$ in $\CP^{2}$ that intersects the quadric twice at the
points $x_{1}$ and $x_{2}$ where $x_{i} = e_{i}' \cap Z$. Denote the
resulting holomorphic curve $u_{1}$ and likewise build the
holomorphic curves $u_{2}$ and $u_{3}$.  By construction we have
that in $\Z_{2}$-homology $[L] \cap U_{i} = 1$ and $[L] \cap H = [L]
\cap E_{i} = 0$, and this is enough to determine that $[L] = U_{1} +
U_{2} + U_{3}$.
\end{proof}

 \subsubsection{The monomorphism $\iota$}

Let $\cL_L\subset H_2(M,\Z)$ be the subgroup formed by elements
$[\alpha]$ with trivial $\Z_{2}$-intersection product with $[L] \in
H_{2}(M;\Z_{2})$. In Lemma~\ref{l:HERB} we will give an explicit
monomorphism $\iota:\cL_L\hookrightarrow H_2(W;\Z)$, but first we
need the following lemma whose proof was pointed out to us by Robert
Gompf.

\begin{lem}
    Let $X^{4}$ be a closed oriented manifold with $L$ an embedded $\RP^{2}$.
    If a homology class $[\alpha] \in H_{2}(X;\Z)$ pairs trivially $[\alpha] \cdot [L] = 0$ with $[L]$
    in $\Z_{2}$-homology, then
    $[\alpha]$ has a representative cycle $\alpha$ such that $\alpha \cap L = \emptyset$.
\end{lem}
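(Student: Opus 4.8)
The statement is purely topological: an embedded $\RP^2$ in a closed oriented $4$-manifold $X$ that pairs trivially mod $2$ with a class $[\alpha]$ admits a representative cycle disjoint from $L$. My plan is to work with a tubular neighborhood $\nu L$ of $L$ and analyze the boundary. Since $L = \RP^2$ is non-orientable but $X$ is orientable, the normal bundle $\nu L \to L$ is the orientation line bundle of $L$ (this is forced by $w_1(\nu L) = w_1(TX)|_L + w_1(TL) = w_1(TL)$), so $\nu L$ is the nonorientable disk bundle over $\RP^2$ and its boundary $\partial(\nu L) = S$ is a circle bundle over $\RP^2$. The key fact is that $S$ is connected and is in fact diffeomorphic to $S^3/Q_8$ or more simply: $H_1(S;\Z) \cong \Z/4$ (the boundary of the $D^2$-bundle with Euler number $\pm 2$ over $\RP^2$), so in particular $H_2(S;\Z)$ is torsion, and crucially $H_1(S;\Z_2) \cong \Z_2$ generated by the fiber class, while the image of $H_1(S;\Z) \to H_1(\nu L;\Z) = H_1(\RP^2;\Z) = \Z_2$ is surjective with kernel generated by $2$ times the fiber.

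**Main steps.** First I would take a generic representative $\alpha$ of $[\alpha]$ meeting $L$ transversally in finitely many points; then $\alpha$ meets $\nu L$ in a collection of fiber disks, and $\alpha' := \alpha \cap (X \setminus \Int \nu L)$ is a surface with boundary a collection of fibers in $S = \partial \nu L$. The number of boundary circles, counted mod $2$, is $[\alpha]\cdot[L] \bmod 2 = 0$ by hypothesis, so the boundary $\partial \alpha'$ is null-homologous in $H_1(S;\Z_2)$ — indeed it is $2m$ copies of the fiber for some integer $m$, hence null-homologous in $H_1(S;\Z)$ as well since $2\cdot(\text{fiber})$ bounds in $S$ (here I use that the fiber has order $4$ in $H_1(S;\Z)\cong\Z/4$, so $2\cdot$fiber has order $2$... correction: I need $\partial\alpha'$ to bound, so I should arrange the $2m$ fibers to be parallel copies and use that an even number of parallel fibers bounds a subsurface of $S$). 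The cleanest route: pair the intersection points of $\alpha$ with $L$ and connect each pair by an embedded arc in $S$ thickened to a band, surgering $\alpha'$ along these bands to cancel boundary components in pairs; each such surgery changes $\partial\alpha'$ by a null-homologous curve and the result can be capped off, producing a closed cycle in $X\setminus L$ homologous to $[\alpha]$. The homology class is preserved because the bands lie in $\nu L$ and the modification is a finite sequence of cut-and-paste moves within a collar.

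**The obstacle.** The delicate point is ensuring the surgered surface is actually closed and disjoint from $L$, and that the resulting class in $H_2(X;\Z)$ equals $[\alpha]$ on the nose rather than differing by a class supported near $L$. The group $H_2(\nu L;\Z) = H_2(\RP^2;\Z) = 0$, so any closed cycle pushed off $L$ into $X\setminus L$ that was homologous to $\alpha$ in $X$ stays homologous to $\alpha$ — thus I must verify that the pairing-up-and-banding procedure genuinely lands in $X\setminus L$ with no leftover boundary, which amounts to checking that $2m$ parallel fibers of $S\to\RP^2$ cobound a subsurface of $S$ (equivalently that the fiber class has order dividing $2m$ in $H_1(S;\Z_2)$, which holds since it has order $2$). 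So the argument reduces to: (i) the normal bundle computation identifying $S$; (ii) the mod-$2$ intersection count forcing an even number of punctures; (iii) an explicit band-surgery in $S$ eliminating the punctures in pairs; (iv) invariance of the homology class since $H_2(\nu L) = 0$. I expect step (iii) — making the surgery arcs embedded and disjoint and the bands properly framed — to be the only place requiring care, and it is standard.
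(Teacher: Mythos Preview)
Your approach is essentially the paper's: cancel the intersection points in pairs by tubing along arcs near $L$. The paper's proof is more direct---it skips the homological analysis of $\partial(\nu L)$ entirely and simply chooses, for each pair of intersection points, an arc $\gamma$ in $L=\RP^2$ whose orientation behavior (preserving or reversing a chosen local orientation of $\RP^2$) is dictated by the local intersection signs, so that the tube (the normal circle bundle over $\gamma$) glues orientably to the punctured cycle; the non-orientability of $\RP^2$ is exactly what guarantees both kinds of arcs exist. Your side computation of $H_1(S;\Z)\cong\Z/4$ presumes a specific normal Euler number not given in the hypotheses, and ``the normal bundle is the orientation line bundle of $L$'' is a slip (it is a rank-$2$ bundle with $w_1(\nu L)=w_1(TL)$); neither error is fatal since you abandon that route and do the band surgery anyway. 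The ``bands properly framed \dots\ standard'' step you flag is precisely the orientation bookkeeping the paper makes explicit, and the homology-class preservation via $H_2(\nu L;\Z)=0$ is correct.
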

\begin{proof}
    Pick an orientable representative cycle $C$ of $[\alpha]$ intersecting $L$ transversely
    such that the number of total intersection points is even.
    One may cancel a pair of the intersections $x_{1}, x_{2}$ in the following way:
    By fixing a local orientation on the interior of the unique $2$-cell of $\RP^{2}$, one can
    define local intersection numbers $\pm 1$ at $x_{1}$ and $x_{2}$.
    Now pick a path $\gamma$ in $\RP^{2}$ connecting $x_{1}$ and $x_{2}$ that
    either reserves or preserves the local orientation, depending on if the local
    intersection numbers match or not. 
    Form a new cycle now by removing small neighborhoods of $x_{1}$ and $x_{2}$ in $C$ and then
    connecting the boundaries with a tube $S^{1} \times [0,1]$ corresponding to a small circle section
    over the path $\gamma$ in the normal bundle of $\RP^{2} \subset X$.    
    Since $X^{4}$ is orientable,
    flipping the local orientation in $\RP^2$ is equivalent to flipping
    the normal orientation and hence we have a new orientable cycle representing $[\alpha]$
    with two fewer intersections with $L$.
\end{proof}

\blem\label{l:HERB}  Suppose $M=\CP^2\#k\overline{\CP}^{2}$ with $k\geq 3$, then
 there is a monomorphism  $\iota:\cL_L\hookrightarrow
   H_2(W;\Z)$
   with the following properties:
   \begin{enumerate}[(1)]
    \item It preserves the $\Z$-intersection product.
\item    In the bases from Lemma \ref{l:basis} if
   $[\alpha]=aH-t_1U_{1}-t_2 U_{2}-t_3U_{3}$ 
   then
   \begin{equation}\label{e:iota}
   \iota([\alpha])=aH'- \tfrac{-t_1+t_2+t_3}{2}E_0' - \tfrac{t_1-t_2+t_3}{2}E_1'
   - \tfrac{t_1+t_2-t_3}{2}E_2'
   -\tfrac{t_1+t_2+t_3}{2}E_3'
   \end{equation}
   and $\iota(E_{i}) = E_{i}'$ for $i \geq 4$.
       \item If $E \in H_{2}(M;\Z)$ is an exceptional class, then
    $E$ has an embedded symplectic spherical representative disjoint from $L$ for some
    $K_0$-symplectic form if and only if
    $E \in \mathcal{L}_{L}$
        and $\iota(E)$ is also an exceptional class.
 \end{enumerate}
  \elem
\bpf
	Given $[\alpha] \in \cL_{L}$ define $\i([\alpha]) \in H_{2}(W;\Z)$ by picking a representative cycle
	$\alpha$ in $M\backslash L$ and view it as a cycle in $W$.  Since $H_{2}(M\backslash L) \to H_{2}(M)$
	is injective, it follows that this map is well-defined.  It is clear that
	this map preserves the intersection pairing.

Part (2) is verified by comparing intersection numbers with the
     associated basis of the cycle $\alpha$.
     If $[\alpha]=aH-t_1U_{1}-t_2U_{2}-t_3U_{3}$ 
     and $\iota([\alpha])=aH-\sum_{i\geq0} b_i'E_i'$, then $b_i=b_i'$, $b_0'=b_1'+b_2'+b_3'$,
     $b_2'+b_3'=t_1$, $b_1'+b_3'=t_2$, $b_1'+b_2'=t_3$ from construction of $u_1$, $u_2$, and $u_3$ in Lemma \ref{l:basis}.
     This is exactly the form given by the statement of the lemma after elementary computation.

     For part (3), the `only if' part is trivial.  We prove the `if' part, the definition of $\iota$ implies
that $\langle\iota(E), [Z]\rangle=0$, so by \cite[Theorem 1.2.1]{Mc12},
we have a representative of symplectic exceptional sphere in $W$ disjoint from $Z$.  This is also a symplectic exceptional sphere
in $M\backslash L$ that represents $E$.
\epf



\subsection{Proof of Proposition~\ref{p:KRP2}}
\subsubsection{A Lagrangian $\RP^2$ in $B_{3} = B\#3\overline{\CP}^{2}$}

\blem\label{l:CRP2}
     For a symplectic ball blown-up three times
     $(B_3=B\#3\overline{\CP}^2, \w)$ if the exceptional classes have
     equal small symplectic area, then $B_3$ admits $L$ an
     embedded Lagrangian $\RP^2$ such that $[L] = E_{1} + E_{2} + E_{3} \in H_{2}(B_{3}; \Z_{2})$.
\elem

\bpf
     Suppose the three blow-ups are of size $\alpha$, we are going to
     obtain $B_3$ in a different way as follows.
     We perform the following blow-up on $B$ instead:
     blow-up once of size $t=\frac{3\alpha}{2}+\epsilon$, where
     $0<\epsilon\ll \alpha$.  We denote this
     resulting exceptional sphere as $e_0$ and its class as $E_0$.
     Again perform three more blow-ups on $e_0$ with equal size
     $\frac{\alpha}{2}-\epsilon$, where the resulting exceptional spheres and
     classes are denoted as $e_i$ and $E_i$, respectively, for
     $i=1,2,3$.  We now have a symplectic sphere $e_{0123}$ in class $E_0-E_1-E_2-E_3$
     of self-intersection $(-4)$ and area $4\epsilon$ in $(B_4=B\#4\overline{\CP}^2,\w')$.
     The gluing construction along $e_{0123}$ with $\CP^2$ along a quadric shows that the resulting
     symplectic open manifold is $B_3$ with the correct symplectic
     areas.  One way of verification is that, one can regard all
     these surgeries being supported in the complement of a line in
     $\CP^2$, which gives $\CP^2\#3\overline{\CP}^2$ by \cite[Theorem
     1.1]{Do10a} and \cite[Theorem 1.1]{HK99}.  But removing a line from
     $\CP^2\#3\overline{\CP}^2$ clearly gives $B_3$.  The symplectic
     areas are checked directly from the construction in Lemma
     \ref{l:basis}. For example, consider $\w(u_1)$.  It consists of the
     gluing of $e_1$, $e_2$ and $e_{12}$.  Notice $e_{12}$ is a line in the $\CP^2$
     used for rational blow-down. Therefore, it has half the area of the quadric,
     which has equal area of $e_{0123}$.  It follows that
     $$\w(u_1)=\w'(e_1)+\w'(e_2)+\w'(e_{12})=(\tfrac{\alpha}{2}-\epsilon)+(\tfrac{\alpha}{2}-\epsilon)+
     \tfrac{1}{2}4\epsilon=\alpha$$
     so the exceptional sphere has the correct area.
\epf

\brmk\label{r:LCAS}
     Notice that from the above construction, the resulting
     Lagrangian $\RP^2$ has the three exceptional spheres as its
     associated basis from Lemma~\ref{l:basis}.
\ermk

\subsubsection{Proof of  Proposition \ref{p:KRP2}}
\bpf[Proof of  Proposition \ref{p:KRP2}]

We only need to prove this for $k=9$.  For larger $k$ the argument carries over with no further efforts.
For part (1) by Lemma \ref{l:CRP2} and Remark~\ref{r:LCAS}
we can construct a Lagrangian $L$ which is an embedded $\RP^{2}$ in
certain $(M=\CP^2\#9\overline{\CP}^{2},\w')$ such that its associated basis from
Lemma~\ref{l:basis} is $\{H, U_{1}, U_{2}, U_{3}, E_{4}, \dots, E_{9}\}$
and its $\Z_{2}$-homology class is $[L] = U_{1} + U_{2} + U_{3}$. 

Since this can be done such that $U_{1}, U_{2}, U_{3}, E_{4}, \dots, E_{9}$
have $\w'$-area as small as we like by the construction in Lemma \ref{l:CRP2}, we may assume that
$$E = 3H - 2U_{1} - E_{4} - \cdots - E_{9}$$
is represented by an exceptional symplectic sphere in $(M, \w')$.  Note that $E \cdot [L] = 0$ in
$\Z_{2}$-homology, so we can apply part (2) in Lemma~\ref{l:HERB} to see that
$$
\iota(E) = 3H' + E_{0}' - E_{1}' - E_{2}' - E_{3}' - E_{4}' - \cdots - E_{9}' \in H_{2}(W; \Z).
$$
It is well-known this class has minimal symplectic genus equal $1$, which goes back to Kervaire--Milnor
\cite{KM61},
see also \cite[(3.1) on pp.\ 3]{Li03}.
Therefore since $\iota(E)$ is not an exceptional class, it follows from part (3) in Lemma~\ref{l:HERB}
that $E$ does not have a symplectic representative disjoint from $L$.

     For part (2) consider $(M'=M\#\overline{\CP}^2,\w)$, which is a symplectic blow-up of
     $M$ away from $L$.  The new exceptional class and sphere will be denoted as $E_{10}$
     and $e_{10}$, respectively. Note that $e_{10}\cap L=\emptyset$.  Now extend $\{E, E_{10}\}$ to a homology basis
     consisting of a line class and exceptional classes otherwise.
     This can always be done, for example, by taking the Cremona transforms which sends $E_1$ to $E$ and fixes
     $E_{10}$ (such Cremona transforms can always be found, see \cite{MSc12}) and act
     on the original basis $\{H,E_1\dots E_{10}\}$.

     Let $F$ be a diffeomorphism $F:M'\rightarrow M'$ inducing the Cremona transform
     that is reflection along $E - E_{10}$, then $F_*$ switches $E$ and $E_{10}$ and acts trivially
     on the rest of basis elements.
     Clearly $L'=F(L)\hookrightarrow (M',(F^{-1})^*\w)$ is a Lagrangian embedding disjoint from
     the spherical representative $F(e_{10})$ of class $E$.  By the form
     of $F_*$ and the fact that $[L]\cdot E=[L]\cdot E_{10}=0$ in $\Z_2$-pairing,
     we see that $L'$ is $\Z_2$-homologous to $L$.

     To see that $L$ and $L'$ are not smoothly isotopic, we assume
     the contrary.  Such an isotopy can be extended to a family of
     diffeomorphism $f_t$ for $t\in[0,1]$ of $M'$.  Let $e$ be a
     representative of $E$ with $e\cap L'=\emptyset$, for example, $e=F(e_{10})$.  Then we have that $f_1^{-1}(e)\cap
     L=\emptyset$.  This is a contradiction to part (1).
\epf



\section{Symplectic del Pezzo surfaces}\label{s:del Pezzo}

\subsection{Lagrangian $\RP^{2}$'s in del Pezzo surfaces}

Let $(X_{k}, \w_{0})$ be a symplectic del Pezzo surface, i.e.\ a monotone $\CP^{2}\#k \overline{\CP}^{2}$ built by blowing up $k \leq 8$ disjoint balls $B(\tfrac{1}{3}) \subset (\CP^{2}, \w)$ where
$\w$ is normalized so that $\int_{\CP^{1}}\w = 1$.

\begin{prop} \label{p:pafRP}   There is a symplectic packing of $8$ balls $B(\tfrac{1}{3})$ into
    $(\CP^{2} \setminus \RP^{2}, \w)$ and therefore there is a Lagrangian $\RP^{2}$ in any
    del Pezzo surfaces $(X_{k}, \w_{0})$ for $k \leq 8$.
\end{prop}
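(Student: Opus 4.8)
The plan is to convert the packing statement, via the symplectic cut of Section~\ref{s:SCBP}, into the claim that a specific cohomology class on a nine-fold blow-up of $\CP^{2}$ lies in the symplectic cone relative to a symplectic $(-4)$-sphere, and then to settle that using McDuff's relative inflation \cite{Mc12}. First I would fix the standard Lagrangian $L=\RP^{2}\subset(\CP^{2},\omega)$ (all such are symplectomorphic, so this costs nothing) and note that a packing of $\CP^{2}\setminus L$ by compact balls is the same as a packing of $\CP^{2}\setminus\overline{\mathcal N}$ for some Weinstein neighborhood $\mathcal N$ of $L$. Cutting along $\partial\mathcal N$ identifies $\CP^{2}\setminus\overline{\mathcal N}$ with $W^{+}\setminus Z$, where by Lemma~\ref{l:b2+} and Remark~\ref{r:names}(2) $W^{+}=W^{+}_{\mathcal N}$ is a symplectic $\CP^{2}\#\overline{\CP}^{2}$ and $Z=Z_{\mathcal N}$ is an embedded symplectic $(-4)$-sphere whose area can be made as small as we wish by shrinking $\mathcal N$; so it suffices to pack eight balls $B(\tfrac13)$ into $W^{+}\setminus Z$.

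Next I would use McDuff's dictionary between ball packings and symplectic forms on blow-ups, in the form relative to the divisor $Z$ that already underlies Proposition~\ref{p:BPRS} and Corollary~\ref{c:EnoughE}. Packing eight balls $B(\tfrac13)$ into $W^{+}\setminus Z$ is equivalent to the existence of a symplectic form $\omega$ on $N:=W^{+}\#8\overline{\CP}^{2}=\CP^{2}\#9\overline{\CP}^{2}$ with $\omega\cdot E_{i}=\tfrac13$ for the eight new exceptional classes $E_{1},\dots,E_{8}$ and with $Z$ still an embedded symplectic $(-4)$-sphere: given such an $\omega$, the classes $E_{1},\dots,E_{8}$ are pairwise orthogonal and pair trivially with $[Z]$, so Corollary~\ref{c:EnoughE} (via \cite{Mc12}) produces disjoint $\omega$-symplectic exceptional spheres in these classes, disjoint from $Z$ by positivity of intersections, and blowing them down packs eight balls of capacity $\tfrac13$ into $W^{+}\setminus Z$.

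To build such an $\omega$ I would start from the form on $N$ obtained by blowing up $(W^{+},\omega_{W^{+}})$ at eight tiny balls away from $Z$ — which leaves $Z$ undisturbed — and deform it, by relative inflation \cite{Mc12}, to the class $\alpha=[\omega_{W^{+}}]-\tfrac13\sum_{i=1}^{8}E_{i}$. By \cite{Mc12} this succeeds once $\alpha$ lies in the $Z$-relative symplectic cone of $N$; using the description of the symplectic cone of $\CP^{2}\#k\overline{\CP}^{2}$ for $k\le 9$ \cite{LL01}, this reduces to the positivity of $\alpha^{2}$ and of $\alpha\cdot C$ for every $K_{0}$-exceptional class $C$, together with $\alpha\cdot[Z]>0$. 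The last inequality is automatic since $Z$ and the $E_{i}$ are small; the decisive one among the rest is precisely the threshold $\tfrac13<\tfrac{6}{17}$ for packing eight equal balls into $\CP^{2}$ (the binding $K_{0}$-exceptional classes being of the type $6H-3E_{a}-2\sum_{b\ne a}E_{b}$), which holds with room to spare.

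Granting $\omega$, we obtain eight balls $B(\tfrac13)$ in $\CP^{2}\setminus L$; and since $X_{k}$ for $k\le 8$ is $\CP^{2}$ blown up at $k$ disjoint balls $B(\tfrac13)$, performing $k$ of our eight blow-ups keeps $L$ off the exceptional divisors, so $L\subset(X_{k},\omega_{0})$ remains a Lagrangian $\RP^{2}$. The step I expect to be the main obstacle is the precise invocation of \cite{Mc12} for the relative cone of the $(-4)$-sphere: $[Z]^{2}=-4$ is exactly the ``quadric'' case McDuff's relative inflation is built to treat, and one must check its hypotheses apply here — in particular that the deformed form still admits $Z$ as an embedded symplectic sphere, which should follow from $Z$ surviving the eight small blow-ups of $W^{+}$ together with the relative-cone computation. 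The remaining points — the homological bookkeeping tracking $\alpha$, the volume normalization of $[\omega_{W^{+}}]$, and reducing the positivity conditions to $\tfrac13<\tfrac{6}{17}$ — I expect to be routine.
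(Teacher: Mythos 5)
There are two genuine gaps. First, your identification of the cut piece is wrong: $W^{+}_{\mathcal N}$ is \emph{not} $\CP^{2}\#\overline{\CP}^{2}$. The complement of a Weinstein neighborhood of $\RP^{2}\subset\CP^{2}$ is the disk bundle of Euler number $+4$ over the conic, so collapsing the boundary gives the Hirzebruch surface $H_{4}=\bP(\mathcal O(4)\oplus\C)$, i.e.\ $S^{2}\times S^{2}$ with the cut divisor $Z$ in class $A-2B$ of small area and form close to $\Omega_{1,\frac12}$; this is forced by Proposition~\ref{p:C-4C} (a $(-4)$-sphere in $\CP^{2}\#k\overline{\CP}^{2}$ needs $k\ge 2$; in $\CP^{2}\#\overline{\CP}^{2}$ the only classes of square $-4$ are $\pm2E$, which violate adjunction) and is exactly how the paper sets things up in Lemma~\ref{l:EQBP}. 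Because of this, your ``routine bookkeeping'' lands on the wrong class: after blowing up the eight balls the class to test on $\CP^{2}\#9\overline{\CP}^{2}$ is (up to scale) $7H-4E_{1}-E_{2}-2(E_{3}+\cdots+E_{9})$, not $H-\tfrac13\sum E_{i}$ plus a small correction, so the ``decisive constraint'' is not the absolute eight-ball threshold $\tfrac13<\tfrac{6}{17}$ with binding classes $6H-3E_{a}-2\sum_{b\ne a}E_{b}$; the relevant positivity conditions involve the $S^{2}\times S^{2}$ areas $(1,\tfrac12)$ and the class $A-2B$, and the paper settles them by Cremona-reducing $(7|4,1,2,\dots,2)$ to $(4|2,1,\dots,1)$ and quoting the Li--Liu criterion \cite{LL01}.

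Second, your central step --- ``by \cite{Mc12} a form in class $\alpha$ with $Z$ still symplectic exists once $\alpha$ lies in the $Z$-relative symplectic cone, and that cone is the absolute Li--Liu cone cut down by $\alpha\cdot[Z]>0$'' --- is asserted, not proved, and no such characterization of the relative cone of a $(-4)$-sphere is available off the shelf in \cite{Mc12} or \cite{LL01}; producing the inflation curves in the complement of (or positively intersecting) a sphere with $d(Z)<0$ is precisely the non-generic difficulty, and you flag it but do not resolve it. The paper avoids needing any relative-cone statement: Lemma~\ref{l:EQBP} converts the problem into an \emph{absolute} packing of $(S^{2}\times S^{2},\Omega_{1,\frac12})$, using Biran's decomposition for the quadric \cite{Bi01} in one direction and, in the other, Gromov--Witten theory to push the packing off a curve in class $B$ followed by Biran's relative inflation \cite{Bi99} along an explicit configuration. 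If you want to keep your formulation, you would need to either prove the relative-cone claim for this specific $(-4)$-sphere or replace it by the Lemma~\ref{l:EQBP}-style reduction; as written the argument does not close.
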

\bpf
    By Lemma~\ref{l:EQBP} below it suffices to pack $8$ balls of size $\frac{1}{3}$
    into $(S^2\times S^2,\Omega_{1,\frac{1}{2}})$.
    Since blowing up a ball of size $\tfrac{1}{3}$  in
    $(S^2\times S^2,\Omega_{1,\frac{1}{2}})$ leads to
    $(\CP^{2}\#2 \overline{\CP}^{2}, \w')$ with $\w'$ dual to the class
    $\frac{7}{6}H-\frac{2}{3}E_1-\frac{1}{6}E_2$, it suffices to prove that
    the vector
    $$
        (7|4,1,2,2,2,2,2,2,2) = 7H - 4E_{1} - E_{2} - 2 \sum_{i=3}^{9} E_{i}
    $$
    is Poincare dual to a symplectic form for $\CP^{2}\#9 \overline{\CP}^{2}$.

    Permuting the $E_{j}$ and applying the Cremona transform along
    $H-E_1-E_2-E_3$ three times, transforms $(7|4,1,2,2,2,2,2,2,2)$
    into $(4|2, 1, 1, 1, 1, 1, 1, 1, 1)$, which is a reduced vector and it follows from the criterion in
    \cite[Lemma 4.7(2)]{LL01}
    that it represents a symplectic form.  Similar arguments appeared in \cite{BP12}.
\epf

Let $\Omega_{\alpha,\beta} = \alpha \sigma_{A} \oplus
\beta \sigma_{B}$ be the symplectic form on $S^{2}\times S^{2}$
where $\sigma_{A}, \sigma_{B}$ are area forms on
each factor with area $1$.
When $\alpha > 2\beta$, there is a symplectomorphism between
$(S^{2} \times S^{2}, \Omega_{\alpha, \beta})$ and the Hirzebruch surface $(H_{4}, \Omega_{\alpha, \beta})$.
Here $H_{4} = \bP(\mathcal{O}(4) \oplus \C) \to \CP^{1}$, the fiber has area $\beta$,
the zero section $Z_{0} = \bP(0 \oplus \C)$ has area $\alpha + 2\beta$,
and the section at infinity $Z_{\infty} = \bP(\mathcal{O}(4) \oplus 0)$ has area $\alpha - 2\beta$,
and the symplectomorphism identifies $Z_{\infty}$ with a symplectic surface $Z \subset S^{2} \times S^{2}$
in class $A-2B$.

\blem\label{l:EQBP}
    Symplectic ball packing for
    $(\CP^2\backslash\RP^2,\w)$ and
    $(S^2\times S^2,\Omega_{1,\frac{1}{2}})$ are equivalent.
\elem

\bpf
    It follows for the Biran decomposition \cite[Theorem 1.A]{Bi01}
    associated to the quadric $Q \subset \CP^{2}$ \cite[Section 3.1.2]{Bi01}, that there is an open
    set in $(\CP^{2} \setminus \RP^{2}, \w)$ symplectomorphic to
    $(H_{4} \setminus Z_{\infty}, \Omega_{\alpha, \beta})$
    where $\alpha>2\beta$ and $\alpha+2\beta=2$.
    Hence there is a symplectic embedding of
    $(S^2\times S^2 \backslash Z, \Omega_{\eps})$ into
    $(\CP^2\backslash \RP^2,\w)$, where $\Omega_{\eps} = \Omega_{\alpha, \beta}$
    with $\alpha=1+2\epsilon$ and
    $\beta=\frac{1}{2}-\epsilon$.

    If $\phi:\coprod B_i\hookrightarrow (\CP^2\backslash \RP^2, \w)$
    is a symplectic packing, then by the openness of
    symplectic ball-packing we also have an packing of $(1+\delta) \coprod B_i$
    for some $\delta > 0$.  Performing a symplectic cut around $\RP^{2}$ gives
    a symplectic packing of $(1+\delta) \coprod B_i$ into
    $(S^2\times S^2,\Omega_{\eps})$ and by rescaling we have
    a symplectic packing of $\coprod B_i$ into $(S^2\times S^2,\tfrac{1}{1+\delta}\Omega_{\eps})$.
    Taking $\eps = \delta/2$ gives a packing of $(S^{2} \times S^{2}, \Omega_{1,c})$ where $c < 1/2$
    and hence a packing of $\Omega_{1,\frac{1}{2}}$.

 Conversely, let $\phi:\coprod B_i\hookrightarrow (S^2\times
    S^2,\Omega_{1,\frac{1}{2}})$ be a symplectic packing and form a blow-up using $\phi$.
    Standard Gromov--Witten theory shows that one may find
    a set of exceptional spheres in the complement of a curve in class $B$ of the blown-up manifold.
    This implies the packing can be performed in the complement of a curve in class $B$ and hence a packing of
    $(S^2\times S^2,\Omega_{1,\frac{1}{2}-\eps})$ for some $\eps>0$.  This in turn gives
    a packing of $(S^2\times S^2,\Omega_{\eps})$.
    Using the relative inflation methods of \cite{Bi99} gives
    a packing of $(S^2\times S^2,\Omega_{\eps})$ in the complement of the $(-4)$-sphere
    $Z$ and hence into an open subset of $(\CP^2\backslash \RP^2, \w)$.
\epf

\subsection{The Proof of Theorem~\ref{t:RPNS} and applications}\label{s:final remarks}

The key to proving Theorem~\ref{t:RPNS} is the following lemma.

\begin{lem}\label{l:sdfRP}
    Let $(\overline{M}^{4}, \overline{\w})$ be a closed rational or ruled symplectic manifold
    with rational symplectic form $[\overline{\w}] \in H^{2}(\overline{M}; \Q)$.
    Suppose that
    $L \subset (\overline{M},\overline{\w})$ is a Lagrangian $\RP^{2}$ or $S^2$, there are two orthogonal
    exceptional classes $E_{1}, E_{2} \in \mathcal{E}_{\overline{\w}}$ with the same symplectic
    area $a$, and $\langle[E_i],[L]\rangle_{\Z}=0$.  Then there is a Lagrangian sphere
    $L_{0} \subset (\overline{M}, \overline{\w})$ in class
    $E_{1} - E_{2}$ that is disjoint from $L$.
\end{lem}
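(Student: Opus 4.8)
The plan is to reduce the statement, via the symplectic cutting construction of Section~\ref{s:SCBP}, to a question about a symplectic sphere; to use McDuff's relative curve results to produce two disjoint exceptional spheres away from $L$; and then to build the required Lagrangian sphere in the difference class inside a small neighbourhood of those two spheres. Since $E_1-E_2$ has self-intersection $-2$ and satisfies $K_{\overline{\w}}\cdot(E_1-E_2)=0$, and since $E_1-E_2$ inherits from $E_1,E_2$ the relation $\langle E_1-E_2,[L]\rangle_{\Z}=0$, it suffices to produce an embedded Lagrangian sphere in class $E_1-E_2$ lying in $\overline{M}\setminus L$.

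\textbf{Step 1.} First I would produce two disjoint symplectic exceptional spheres $C_1,C_2$ representing $E_1,E_2$, of equal area $a$, lying in $\overline{M}\setminus L$. Cut a small Weinstein neighbourhood $\cN$ of $L$ to obtain $(W,Z)$; by Lemma~\ref{l:b2+}, $W$ is rational or ruled (so $b_{2}^{+}(W)=1$) and $Z$ is a symplectic $(-4)$-sphere when $L=\RP^2$, a symplectic $(-2)$-sphere when $L=S^2$, with $W\setminus Z$ symplectomorphic to $M\setminus\cN$. The hypothesis $\langle E_i,[L]\rangle_{\Z}=0$ puts $E_i$ in the image of $H_2(M\setminus L;\Z)\to H_2(M;\Z)$, so under the identification $H_2(W\setminus Z;\Z)\cong H_2(M\setminus L;\Z)$ --- the monomorphism $\iota$ of Lemma~\ref{l:HERB} in the $\RP^2$ case --- it corresponds to an exceptional class $E_i'\in H_2(W;\Z)$ with $\langle E_i',[Z]\rangle=0$. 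Now McDuff's results on holomorphic curves in the complement of $Z$ \cite{Mc12}, used exactly as in Corollary~\ref{c:EnoughE}, together with the fact that $b_{2}^{+}(W)=1$ makes deformation imply isotopy, yield embedded holomorphic --- hence symplectic --- spheres $C_1,C_2$ in classes $E_1',E_2'$, disjoint from one another and from $Z$; transported back they become the desired spheres in $M\setminus L$, with $\overline{\w}(C_1)=\overline{\w}(C_2)=a$.

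\textbf{Step 2.} Next I would resolve $C_1\cup C_2$ into a Lagrangian sphere. Blow down $C_1$ and $C_2$ to get $(\overline{M}_{0},\w_{0})$, still rational or ruled, in which $L$ remains a Lagrangian $\RP^2$ or $S^2$ and which contains two disjoint symplectic balls $B_1(a),B_2(a)$ of equal capacity $a$, disjoint from $L$. Joining their centres by an embedded arc $\gamma$ in $\overline{M}_{0}\setminus L$ --- possible because $L$ has codimension two --- a neighbourhood of $\overline{B_1}\cup\gamma\cup\overline{B_2}$ is a standard model of this configuration and is disjoint from $L$; blowing $B_1$ and $B_2$ back up, which recovers $\overline{M}$, this model carries an embedded Lagrangian sphere $L_0$ in class $E_1-E_2$. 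Here I would invoke the standard local description of a Lagrangian sphere as the vanishing (matching) cycle of two blow-ups of equal size, equivalently as the image of the antidiagonal Lagrangian sphere of $(S^2\times S^2,\Omega_{1,1})$ under a diffeomorphism $S^2\times S^2\#\overline{\CP}^{2}\cong\CP^2\#2\overline{\CP}^{2}$ sending $A-B$ to $E_1-E_2$. Since the whole construction lives in a neighbourhood of $C_1\cup\gamma\cup C_2$, which is disjoint from $L$, the sphere $L_0\subset\overline{M}\setminus L$ is the one sought.

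The main obstacle is the bookkeeping of Step 1 that keeps everything disjoint from $L$: that the two exceptional spheres, and with them the whole model of Step 2, can be realised inside $\overline{M}\setminus L$ for the given form $\overline{\w}$, not merely up to deformation. This is exactly what the relative curve machinery of \cite{Mc12} and the push-off constructions of Section~\ref{s:-4 spheres} are designed to provide, and it is here that the rationality of $[\overline{\w}]$ (with $b_{2}^{+}=1$) enters, to upgrade the relative existence statements to genuine symplectic isotopies. A secondary point needing care is pinning down the standard model for ``two equal-size exceptional spheres joined by an arc'' and checking that the antidiagonal sphere fits inside it; for this one leans on the same neighbourhood-structure results of \cite{Mc12} that yield the push-off sets.
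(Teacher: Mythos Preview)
Your Step~1 matches the paper. The gap is Step~2: the assertion that a neighbourhood of $\overline{B_1}\cup\gamma\cup\overline{B_2}$ is a ``standard model'' carrying a Lagrangian $(E_1-E_2)$--sphere does not follow. A thin tube along $\gamma$ is symplectically standard and each ball is Darboux, but their union need not embed in any Darboux chart large enough to run the matching-cycle/antidiagonal construction at the full capacity $a$; equivalently, after blow-up nothing forces the two $(-1)$-spheres to sit in a common local model containing a Lagrangian sphere in their difference class. Invoking Theorem~\ref{t:BPRL-sphere} to slide $B_1,B_2$ into one chart does not help either, since that presupposes a chart of the required size already exists in $\overline{M}_0\setminus L$, which is not given.

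The paper resolves exactly this point: it passes to sub-balls $B_i(\epsilon)\subset B_i(a)$ small enough to lie in a single Darboux chart disjoint from $Z$, blows those up to obtain a form $\bar\w_\epsilon$ in the \emph{wrong} cohomology class, and builds the Lagrangian sphere $L_0'$ there via the local construction of \cite[Lemma~5.4]{LW12}. The return from $\bar\w_\epsilon$ to $\bar\w$, keeping $L_0'$ Lagrangian and $Z$ symplectic, is achieved by Biran's relative inflation along an embedded surface in class $q\bigl(PD(p^*[\w])-b(E_1+E_2)\bigr)$ for $q\gg 0$. This inflation step is precisely where the hypothesis $[\overline\w]\in H^2(\overline M;\Q)$ is consumed --- it is needed so that some multiple of that class is integral and hence representable by a surface --- and not in Step~1, which requires only $b_2^+=1$ and \cite{Mc12}. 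That your outline never actually spends the rationality hypothesis is itself a signal that the direct full-size construction in Step~2 is missing a genuine ingredient.
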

\begin{proof}
    Notice that the homological pairing condition is void when $L=\RP^2$.  We only give the proof
    for $\RP^2$, and the case for $L=S^2$ is identical with the trivial pairing assumption.

    By Proposition~\ref{p:DfRP} we can pick representatives $C_{i}$ for the classes $E_{i}$
    that are disjoint from $L$.  Hence performing a symplectic cut on
    a neighborhood of $L$ produces $(\overline{W}, \bar{\w})$ with cut divisor $Z$
    disjoint from the exceptional spheres $C_{i}$.
    Let $(W, \w)$ be the result of blowing down the two curves
    $C_{i}$ to balls $B_{i}(a)$, so that
    $B_i(a)$ are disjoint from $Z \subset W$, and let $p: \overline{W} \to W$ be
    the topological blow down map.

    Taking sub-balls $B_{i}(\eps) \subset B_{i}(a)$ for $\epsilon$ sufficiently small, we can
    assume there is a Darboux chart that is disjoint from $Z$ and contains both $B_{i}(\eps)$.
    If $(\overline{W}, \bar{\w}_{\eps})$ is the result of blowing-up these two balls, it
    it follows from the local construction in the proof of \cite[Lemma 5.4]{LW12} that there is
    a Lagrangian sphere $L_{0}' \subset (\overline{W}, \bar{\w}_{\eps})$ that is disjoint from $Z$ and in class
    $E_{1} - E_{2}$.

    Let $B_{b} = PD(p^{*}[\w]) - b\,(E_{1} + E_{2})$ where $b$ is a rational number slightly
    larger than $a$, then by
    \cite[Lemma 5.1]{LW12} and \cite[Lemma 2.2.B]{Bi99}
    for
    $q \gg 0$ sufficiently large we have
    a symplectic surface $C \subset (\overline{W}, \bar{\w}_{\eps})$
    such that $[C] = qB_{b}$, is disjoint from
    the Lagrangian sphere $L_{0}'$, and intersects $Z$ transversally.
    Biran's relative inflation method \cite[Proposition 2.1.A]{Bi99} along $C$ now
    builds a symplectic form $\bar{\w}' \in [\bar{\w}]$ on $\overline{W}$ such that
    $L_{0}'$ is $\bar{\w}'$-Lagrangian and $Z$ is $\bar{\w}'$-symplectic.
Since by construction $\bar{\w}'$ and $\bar{\w}$ are deformation equivalent, through symplectic forms for which $Z$ is a symplectic manifold it follows from \cite[Proposition 1.2.9]{Mc12} that
there is an symplectomorphism
$\phi:(\overline{W},\bar{\w}') \to (\overline{W}, \bar{\w})$ that preserves $Z$.  Hence $L_{0} = \phi(L_{0}')$
is a Lagrangian sphere in $(\overline{W}, \bar{\w})$ that is disjoint from $Z$ and is in homology class
$E_{1} - E_{2}$.  Gluing a neighborhood of $\RP^{2}$ back in for $Z$ gives the desired result.
\end{proof}

\begin{proof}[Proof of Theorem~\ref{t:RPNS}]
    Since we are in the monotone case, all Lagrangian spheres $S^{2} \subset X_{k}$
    are heavy with respect to the fundamental class $\ide \in QH_{4}(X_{k}; \Z/2\Z)$
    by \cite[Theorem 1.17]{EP09}.  So to show that a given Lagrangian $L$ is not superheavy
    with respect to the fundamental class $\ide$, by \cite[Theorem 1.4]{EP09} it suffices
    to build a Lagrangian sphere $L'$ disjoint from $L$.

    Let $L$ be a Lagrangian $\RP^{2}$ in $(X_{k}, \w_{0})$ with $k \geq 2$, then
    by Lemma~\ref{l:sdfRP} there is a non-trivial Lagrangian
    sphere $L_{0} \subset X_{k}$ that is disjoint from $L$.

    Let $L$ be a Lagrangian $S^{2}$ in $(X_{k}, \w_{0})$ with $k \geq 3$.
    By \cite[Theorem 1.4]{LW12} and Lemma~\ref{l:sdfRP} there are
    Lagrangian spheres $L_{1}$ and $L_{2}$ that are disjoint from each other.
    Using \cite[Theorem 1.8]{LW12} and
    Corollary \ref{c:TTS2}, we have a symplectomorphism $\psi$ such that $\psi(L_2)=L$
    and therefore $\psi(L_1)$ is a Lagrangian $S^{2}$ disjoint from $L$.
\end{proof}

\begin{remark}
     The case when $k=2$ for Lagrangian spheres can be proved by adapting the computation of Fukaya--Oh--Ohta--Ono \cite{FOOO} in the $S^2\times S^2$ case to $S^2\times S^2\#\overline{\CP}^{2}=(X_2,\w_2)$.  Therefore
     combining the result for $k=0$ proved in \cite{Wu12}, monotone spherical Lagrangians in rational surfaces are proved to not be superheavy, with the exception of $\RP^2\subset \CP^2\#\overline{\CP}^{2}$, which presumably can be done using techniques in \cite{Wu12}.
\end{remark}



     We discuss an application of Theorem \ref{t:RPNS} of potential
     interest.
     We will consider the toric degeneration picture Figure \ref{fig:side:a} of
     $S^2\times S^2$ that first appeared in \cite{FOOO}, but let us note that
     it is possible for one to consider other semi-toric systems as well (cf.\ \cite{Wu12}).

\begin{figure}[h]
\begin{minipage}[t]{0.5\linewidth}
\centering
\includegraphics[width=2.4in]{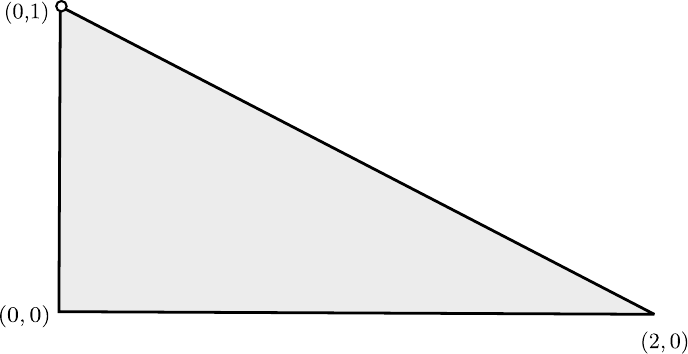}
\caption{$S^2\times S^2$} \label{fig:side:a}
\end{minipage}%
\begin{minipage}[t]{0.5\linewidth}
\centering
\includegraphics[width=2.4in]{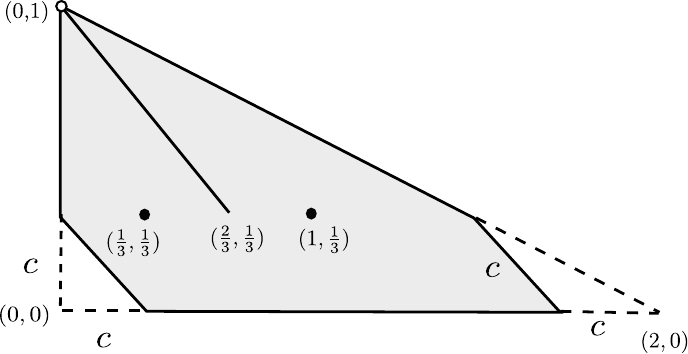}
\caption{$S^2\times S^2\#2\overline{\CP}^{2}$} \label{fig:side:b}
\end{minipage}
\end{figure}

The reader should consider Figure 1 as an ``open polytope" in the sense
that there is no actual toric action on $S^2\times S^2$ with the
polytope in Figure \ref{fig:side:a}.  Instead, there is such a toric
action on the complement of the anti-diagonal $S^2\times
S^2\backslash \bar{\Delta}$, whose moment map has image equal the polytope
in Figure 1 with the white dot removed.

In this open toric picture, which is closely related to the \textit{semi-toric} case
considered by \cite{PN09}, one could still perform two toric blow-ups of size $c$
by standard ``corner chopping", giving Figure 2 as moment polytope.  When $c=\frac{1}{3}$, the resulting
symplectic manifold $S^2\times S^2\#2\overline{\CP}^2$
is monotone and symplectomorphic to $X_3$ in
Theorem \ref{t:RPNS}.
Since the proper transform of the
Lagrangian $\bar{\Delta}\subset S^2\times S^2$ is not
superheavy
it follows that some fiber in the open polytope
must be non-displaceable by \cite[Theorem 1.8]{EP09}.
One may further narrow down the position of these fibers using
probes invented by McDuff \cite{Mc11}.  It is not hard to check
that there are three sets of potentially non-displaceable fibers as
shown by the solid line and two extra dots in Figure
\ref{fig:side:b}.  What we showed is that at least one of these
points is actually non-displaceable.


\appendix

\section{Lagrangian spheres in rational manifolds}

\subsection{Characteristic Lagrangian spheres}

We first recall from \cite[Definition 3.3]{LW12} that a \textit{stable spherical symplectic configuration} is an ordered configuration of symplectic spheres with: (1) $c_1\geq1$ for all irreducible
components, (2) the intersection numbers between two different components are 0 or 1, (3) they are
simultaneously holomorphic with respect to some almost complex structure $J$ tamed by the symplectic form.
We will call them stable configurations for brevity.  In the proof of \cite[Theorem 1.5]{LW12}, 
the following intermediate result is reached.

\blem\label{l:LW1.5} In $\CP^2\#4\overline{\CP}^2$, let $L_1$ and $L_2$ be Lagrangian spheres
in the homology class $E_{1} - E_{2}$ and suppose they are disjoint from a
stable configuration with irreducible components in classes $\{H-E_1-E_2, H-E_3-E_4, E_3, E_4\}$, then
$L_{1}$ and $L_{2}$ are Hamiltonian isotopic in the complement of the stable configuration.
\elem

In particular in the proof of \cite[Theorem 1.5]{LW12} one uses \cite[Proposition 6.8]{LW12}
to show that $L_{1}$ and $L_{2}$ are Hamiltonian isotopic in the complement of the stable configuration.
The same holds true for $\CP^2\#(k+1)\overline{\CP}^2$ as well for $k=1,2$ with the stable
configurations specified in \cite{LW12}.

\begin{thm}\label{t:char}
Lagrangian $S^2$'s in a symplectic rational manifold
with $\chi \leq 7$
are unique up to Hamiltonian isotopy.
\end{thm}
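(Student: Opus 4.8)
The plan is to isolate the single case left open by \cite[Theorem~1.5]{LW12} and dispose of it by a blow-up trick. First I would record the homological input: a Lagrangian $S^2$ satisfies $[L]^2=-2$ and $K_0\cdot[L]=0$, and a short lattice computation shows that among all symplectic rational manifolds $M$ with $\chi(M)\le 7$ the only such class that is moreover \emph{characteristic} (i.e.\ $\equiv K_0 \bmod 2$) is $\xi = H-E_1-E_2-E_3$ in $M=\CP^2\#3\overline{\CP}^2$, up to the action of $\aut(H_2(M),Q_M,K_0)$; every other Lagrangian-sphere class is Cremona equivalent to $E_1-E_2$ (or to $A-B$ in $S^2\times S^2$) and is therefore covered by \cite[Theorem~1.5]{LW12}, whose proof ends with Lemma~\ref{l:LW1.5} and its analogues for $\CP^2\#(k+1)\overline{\CP}^2$, $k\le 2$. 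So it remains to show that any two Lagrangian spheres $L_0,L_1\subset(M,\w)$ with $M=\CP^2\#3\overline{\CP}^2$ and $[L_0]=[L_1]=\xi$ are Hamiltonian isotopic.

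Next I would blow up to destroy the characteristic obstruction. Since $M\setminus(L_0\cup L_1)$ is open and nonempty, choose a small Darboux ball $B(\delta)$ inside it and let $(M',\w')=\CP^2\#4\overline{\CP}^2$ be the symplectic blow-up along $B(\delta)$, with exceptional sphere $e$ in class $E_4$; then $\chi(M')=7$, both $L_j$ are still $\w'$-Lagrangian, and both are disjoint from $e$. In $H_2(M';\Z)$ the class $\xi$ is no longer characteristic: the Cremona transform given by reflection in $H-E_1-E_2-E_4$ sends $\xi$ to $E_3-E_4$ and sends $E_4=[e]$ to $H-E_1-E_2$. In this relabelled basis $[L_j]$ is an ordinary root class, and $[e]$ is precisely one of the four classes $\{H-E_3-E_4,\,H-E_1-E_2,\,E_1,\,E_2\}$ that make up the stable spherical configuration attached to a Lagrangian sphere in class $E_3-E_4$ in Lemma~\ref{l:LW1.5}.

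Now I would re-run the argument of \cite[Theorem~1.5]{LW12} inside $(M',\w')$, but keeping the auxiliary exceptional sphere $e$ under control. Stretching the neck around $L_j$ (equivalently, performing the symplectic cut of Section~\ref{s:SCBP} and applying McDuff's relative curve existence results \cite{Mc12} in $W^{+}_{\cN}\setminus Z_{\cN}$, much as in the proof of Corollary~\ref{c:EnoughE}) produces a stable configuration $\mathcal{C}_j$ of symplectic spheres disjoint from $L_j$ realizing the four classes above, one component of which is an exceptional sphere in class $[e]$. Since all exceptional spheres in that class disjoint from $L_j$ are symplectically isotopic, and since the connectedness of ball packings of $M'\setminus L_j$ (Theorem~\ref{t:BPRL-sphere}, applied in $M'$) lets one realise such an isotopy by an ambient symplectomorphism fixing $L_j$, I can arrange that the $[e]$-component of $\mathcal{C}_j$ is exactly $e$, and, by the same packing-connectedness input, that $\mathcal{C}_0=\mathcal{C}_1=:\mathcal{C}$. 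Lemma~\ref{l:LW1.5} then gives a Hamiltonian isotopy from $L_0$ to $L_1$ supported in $M'\setminus\mathcal{C}\subset M'\setminus e$; blowing $e$ down (it is replaced by $B(\delta)$, which the isotopy avoids) turns this into a Hamiltonian isotopy from $L_0$ to $L_1$ in $M$, completing the theorem.

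The main obstacle is the step just described: forcing $L_0$ and $L_1$ to be simultaneously disjoint from one and the same stable configuration of the prescribed combinatorial type \emph{and} forcing that configuration to contain the particular blow-up sphere $e$. This is exactly where the present paper's packing-connectedness results are indispensable --- they are what let configurations and exceptional spheres found near $L_0$ be carried to those near $L_1$ by symplectomorphisms isotopic to the identity --- and it relies on the compatibility, established above, between the Cremona bookkeeping that trivialises the characteristic obstruction in $M'$ and the homology class of the extra exceptional sphere, which is what makes Lemma~\ref{l:LW1.5} applicable with $e$ sitting inside the configuration so that the isotopy descends to $M$.
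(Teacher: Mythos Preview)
Your overall strategy---reduce to the characteristic class $\xi=H-E_1-E_2-E_3$ in $\CP^2\#3\overline{\CP}^2$, blow up once in the complement of $L_0\cup L_1$ to $\CP^2\#4\overline{\CP}^2$ so that $\xi$ becomes Cremona equivalent to a standard root, invoke Lemma~\ref{l:LW1.5} in the complement of a stable configuration containing the new exceptional sphere, and descend---is exactly the paper's. The Cremona bookkeeping also matches up to an immaterial sign (the reflection in $H-E_1-E_2-E_4$ sends $\xi$ to $E_4-E_3$, not $E_3-E_4$), and after unwinding your relabelling the configuration you need is precisely $\{E_4,\,H-E_1-E_4,\,H-E_2-E_4,\,H-E_3-E_4\}$, the same one the paper uses.

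The gap is in how you arrange \emph{both} $L_0$ and $L_1$ to be disjoint from a \emph{single} stable configuration. You build $\mathcal{C}_0$ and $\mathcal{C}_1$ by neck-stretching around each $L_j$ separately, then assert ``by the same packing-connectedness input, $\mathcal{C}_0=\mathcal{C}_1$''. But Theorem~\ref{t:BPRL-sphere} gives connectedness of packings of $M'\setminus L_j$ for a \emph{single} $L_j$: it lets you move $\mathcal{C}_j$ by an element of $\Symp_0(M'\setminus L_j)$, yet produces no symplectomorphism carrying $\mathcal{C}_0$ to $\mathcal{C}_1$ while keeping the image disjoint from $L_1$ (or even fixing $e$). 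As written this step does not go through, and the claim that the paper's packing-connectedness results are ``indispensable'' here is misplaced---the paper's proof of Theorem~\ref{t:char} does not use Theorem~\ref{t:BPRL-sphere} at all. Instead it builds \emph{one} configuration once and for all: for a generic compatible $J$ on $(M',\w')$ the exceptional classes $H-E_i-E_4$ ($i=1,2,3$) carry unique embedded $J$-holomorphic spheres $C_i$, and together with the blow-up sphere $C$ these form the required stable configuration. Then \cite[Corollary~3.13]{LW12} is used to Hamiltonian-isotope \emph{each} $L_j$ off this fixed configuration (legitimate since every component pairs trivially with $\xi$), which sidesteps entirely the need to compare two separately-built configurations.
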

\bpf
    By \cite[Theorem 1.5 and Proposition 4.10]{LW12},
     we only need to deal with the case where $M=\CP^2\#3\overline{\CP}^2$ and
     $[L_i]=H-E_1-E_2-E_3$ for $i=1,2$.

     Fix a Darboux chart $U_p\subset M$ that is disjoint from $L_{1} \cup L_{2}$
     and centered at the point $p \in M$.
     By blowing-up a symplectically embedded ball $B_p\subset U_p$,
     we can build a symplectic manifold $(M'=\CP^2\#4\overline{\CP}^2,\w')$
     with a exceptional sphere $C$ such that
     $H_2(M';\Z)$ has a basis identified
     with the union of a basis of $H_2(M,\Z)$ and $[C]$, the intersection product
      $[L_{i}] \cap [C] = 0$.

      From the Gromov--Taubes invariant theory, for generic compatible almost complex structure $J$
      the classes $H-E_1-[C]$, $H-E_2-[C]$, and $H-E_3-[C]$
      have unique representatives as $J$-holomorphic exceptional spheres
      $C_1$, $C_2$ and $C_3$, respectively, which are disjoint.
      Since $[C_{i}] \cap [L_{j}] = 0$, \cite[Corollary 3.13]{LW12} builds Hamiltonian
      isotopies $\psi_{j}$ so that $\psi_{j}(L_{j})$ is disjoint from $C_{1} \cup C_{2} \cup C_{3}\cup C$.

     Notice that the set of classes $\{H-E_1-[C], H-E_2-[C], H-E_3-[C], [C]\}$ are Cremona equivalent to
     $\{H-E_1-E_2, H-E_3-E_4, E_3, E_4\}$, Lemma \ref{l:LW1.5} applies.
     It follows that $L_1$ and $L_2$ are Lagrangian isotopic in the complement of a neighborhood of $C\cup\bigcup C_i$ in $(M',\w')$, in particular the complement of $C$ which is symplectomorphic to an open set of $M$.
\epf

\subsection{Proof of Corollary \ref{c:TTS2}} \label{s:A2}

\begin{proof}
Part (2) follows from Theorem \ref{t:char} and \cite[Theorem 1.6]{LW12}.
When $\chi(M) = 6$ and the homology class of the Lagrangians is characteristic, then
Theorem~\ref{t:char} covers part (1).  In all the rest of cases, we
assume that
$[L_i]=E_1-E_2$ without loss of generality by \cite[Proposition 4.10]{LW12}.
Our proof follows the steps sketched in \cite{LW12}.

For each pair $(M, L_i)$ by \cite[Theorem 1.1]{LW12}, away from $L_i$ there is a set of disjoint
$(-1)$~symplectic spheres $C^l_i$ for $l=3,\dots, k+1,$
with
$$
\mbox{$[C_i^l]=E_l$ for $l=3, \dots, k$ and  $[C^{k+1}_i]= H-E_1-E_2$.}
$$
Blowing down the collections $\mathcal{C}_{i} = (C^{3}_{i}, \dots, C^{k+1}_{i})$
separately, results in
$(\tilde{M_i}, \tilde L_i, \mathcal{B}_{i})$ where $\tilde{M_i}$ is a symplectic $S^2\times S^2$ with equal symplectic
areas in each factor, $\tilde L_i$ a
Lagrangian sphere, and $\mathcal{B}_i = (B_{i}^3, \dots, B_{i}^{k+1})$ is a symplectic ball packing
in $\tilde{M}_{i} \backslash \tilde{L}_{i}$
corresponding to $\mathcal{C}_{i}$.

By Lalonde--McDuff \cite{LM2} and Hind \cite{Hi04},
there is a symplectomorphism between the
pairs
$\Psi: (\tilde{M_1}, \tilde L_1) \to (\tilde{M_2}, \tilde L_2)$.
For fixed $l$, the symplectic balls $\Psi(B_{1}^{l})$ and $B_{2}^{l}$ have the same volume
since they come from blowing down the same class.  Hence by
Theorem~\ref{t:BPRL-sphere} there is a compactly supported
Hamiltonian isotopy $\Phi$ of $\tilde{M}_{2}\setminus L_{2}$
connecting the symplectic ball packing $\Psi(\mathcal{B}_1) = \{\Psi(B_{1}^{l})\}_{l}$ and
$\mathcal{B}_{2}$ in $\tilde{M}_{2}\setminus L_{2}$.
Therefore $\Phi \circ \Psi$ is a symplectomorphism between the tuples
$(\tilde{M_i}, \tilde L_i, \mathcal{B}_i)$ and hence upon blowing up induces a
symplectomorphism
$$\psi: (M, L_{1}, \mathcal{C}_{1}) \to (M, L_{2}, \mathcal{C}_{2}).$$
By design $\psi$ preserves the homology classes $E_{1} - E_{2}$, $H-E_{1}-E_{2}$,
$E_{3}, \dots, E_{k}$ as well as the class $[\w]$, from which it follows that $\psi$ it also preserves $H$
and hence $\psi \in \Symp_{h}(M)$.
\end{proof}


\section{$\Z_2$-orthogonal systems and push off systems}

Recall that in the proof of Lemma \ref{l:counting} for $1 \leq k \leq 8$ we need to compare the counts for:

\begin{itemize}
  \item $\Z_{2}$-orthogonal systems for $H$, i.e.\ sets of $k$ exceptional classes that
  are $\Z_2$-orthogonal to $H$ 
  in $\CP^{2}\# k\overline{\CP}^{2}$.
  \item Push off system for $S$, i.e.\ sets of $k$ exceptional classes that are $\Z$-orthogonal to $S=-H+2E_1-E_2$ 
  in $\CP^{2}\# (k+1)\overline{\CP}^{2}$.
\end{itemize}
For ease of notation
when $k\geq4$ we will replace $S$ with the Cremona equivalent
$S' = E_1-E_2-E_3-E_4$.
In the following $\mathcal{O}_{k} = \{E_i\}_{i=1}^k$ will be a $\Z_2$-orthogonal system
for $H$ in $\CP^{2}\#k\overline{\CP}^{2}$, while
$\mathcal{P}_{k} = \{H-E_1-E_i\}_{i=2}^{4}\cup \{E_j\}_{j=5}^{k+1}$ will be
a push off system for $S'$ in $\CP^{2}\#(k+1)\overline{\CP}^{2}$.

\begin{description}
\item[For $k \leq 3$:] 
$H$ has $\mathcal{O}_{k}$ and $S$ has
$\{H-E_1-E_2\} \cup \{E_{i}\}_{i=3}^{k+1}$.

\item[For $4 \leq k \leq 5$:] 
$H$ has $\mathcal{O}_{k}$ and $S'$ has
 $\mathcal{P}_{k}$.

\item[For $k=6$:] 
$H$ has $\mathcal{O}_{6}$ and its Cremona transform along
$2H-\sum_{i=1}^6 E_i$.  While $S'$ has $\mathcal{P}_{6}$ and its Cremona transform along $H-E_5-E_6-E_7$.

\item[For $k=7$:] There are $8$ systems: $H$ has $\mathcal{O}_{7}$ and
its $7$ Cremona transforms along
$$2H-(E_1+\cdots+E_7)+E_i \quad\mbox{for}\quad 1\leq i\leq 7.$$
While $S'$ has $\mathcal{P}_{7}$, its $4$ Cremona transforms along
$$H-(E_5+E_6+E_7+E_8)+E_j \quad\mbox{for}\quad j=5, 6, 7, 8$$
and its $3$ Cremona transforms along
$$2H-E_1-(E_5+E_6+E_7+E_8)+E_j \quad\mbox{for}\quad j=2, 3,4.$$

\item[For $k = 8$:] There are $29$ systems:
$H$ has $\mathcal{O}_{8}$ and its $28$ Cremona transforms along
$$2H-(E_1+\cdots+E_7+E_8)+E_i+E_j \quad \mbox{for}\quad 1\leq i\ne j\leq 8. $$
While $S'$ has $\mathcal{P}_{8}$, its $10$ Cremona transforms along
$$H-(E_5+E_6+E_7+E_8+E_9)+E_i+E_j \quad \mbox{for}\quad 5\leq i\ne j\leq 9,$$
its $15$ Cremona transforms along
$$2H-E_1-(E_5+E_6+E_7+E_8+E_9)+E_j+E_p \quad\mbox{for}\quad 2\leq j\leq 4,\, 5\leq p\leq 9,$$
and its $3$ Cremona transforms along
$$3H-2E_1-(E_2+E_3+E_4)-(E_5+E_6+E_7+E_8+E_9)+E_q \quad\mbox{for} \quad 2\leq q\leq 4.$$
 \end{description}

\bigskip

\noindent
\begin{tabular}{lll}
Matthew Strom Borman & Tian-Jun Li & Weiwei Wu \\
University of Chicago &  University of Minnesota & Michigan State University \\
borman@math.uchicago.edu & tjli@math.umn.edu & wwwu@math.msu.edu
\end{tabular}

\end{document}